\author{Benjamin Linowitz}
\address{Department of Mathematics\\University of Michigan\\Ann Arbor, MI 48109}
\email{linowitz@umich.edu}
\author{Jeffrey S. Meyer}
\address{Department of Mathematics\\
University of Oklahoma\\
Norman, OK 73019 USA}
\email[]{jmeyer@math.ou.edu}
\author{Paul Pollack}
\address{Department of Mathematics\\University of Georgia\\Athens, GA 30602}
\email{pollack@uga.edu}
\title{The length spectra of arithmetic hyperbolic $3$-manifolds and their totally geodesic surfaces}
\DeclareMathAlphabet{\curly}{U}{rsfs}{m}{n}
\DeclareMathOperator{\Ann}{Ann}
\DeclareMathOperator{\covol}{covol}
\DeclareMathOperator{\disc}{disc}
\DeclareMathOperator{\Ram}{Ram}
\DeclareMathOperator{\Gal}{Gal}
\DeclareMathOperator{\Norm}{Norm}
\DeclareMathOperator{\PSL}{PSL}
\DeclareMathOperator{\Reg}{Reg}
\DeclareMathOperator{\tr}{tr}
\newtheorem{thm}{Theorem}[section]
\newtheorem{cor}[thm]{Corollary}
\newtheorem{prop}[thm]{Proposition}
\newtheorem{lem}[thm]{Lemma}
\theoremstyle{definition}
\theoremstyle{remark}
\newtheorem{proposition}{Proposition}[section]
\def\1{\mathbf{1}}
\def\disc{\mathrm{disc}}
\def\Prob{\mathbf{Prob}}
\theoremstyle{plain}
\newtheorem{mainthm}{Theorem}
\theoremstyle{remark}
\theoremstyle{plain}
\newtheorem{theorem}[proposition]{Theorem}
\def\C{\mathbf{C}}
\def\Q{\mathbf{Q}}
\def\pp{\mathfrak{p}}
\def\Qq{\curly{Q}}
\def\Pp{\curly{P}}
\def\R{\mathbf{R}}
\def\Q{\mathbf{Q}}
\def\Z{\mathbf{Z}}
\def\1{\mathbf{1}}
\def\disc{\mathrm{disc}}
\def\Gal{\mathrm{Gal}}
\newcommand{\frakp}{\mathfrak{p}}
\newcommand{\abs}[1]{\left\vert#1\right\vert}
\newcommand{\set}[1]{\left\{#1\right\}}
\def\moverlay{\mathpalette\mov@rlay}
\def\mov@rlay#1#2{\leavevmode\vtop{%
   \baselineskip\z@skip \lineskiplimit-\maxdimen
   \ialign{\hfil$\m@th#1##$\hfil\cr#2\crcr}}}
\newcommand{\charfusion}[3][\mathord]{
    #1{\ifx#1\mathop\vphantom{#2}\fi
        \mathpalette\mov@rlay{#2\cr#3}
      }
    \ifx#1\mathop\expandafter\displaylimits\fi}
\let\@@pmod\pmod
\DeclareRobustCommand{\pmod}{\@ifstar\@pmods\@@pmod}
\def\@pmods#1{\mkern4mu({\operator@font mod}\mkern 6mu#1)}
\begin{document}

\begin{abstract}
In this paper we examine the relationship between the length spectrum and the geometric genus spectrum of an arithmetic hyperbolic $3$-orbifold $M$.
In particular we analyze the extent to which the geometry of $M$ is determined by the closed geodesics coming from finite area totally geodesic surfaces. Using techniques from analytic number theory, we address the following problems: {\em Is the commensurability class of an arithmetic hyperbolic $3$-orbifold determined by the lengths of closed geodesics lying on totally geodesic surfaces?, Do there exist arithmetic hyperbolic $3$-orbifolds whose ``short'' geodesics do not lie on any totally geodesic surfaces?}, and {\em Do there exist arithmetic hyperbolic $3$-orbifolds whose ``short'' geodesics come from distinct totally geodesic surfaces?}
\end{abstract}

%------------------------------------------------------------------------------
%------------------- Front data -----------------------------------------------
\maketitle

%\tableofcontents{}

\vspace{-2pc}

%---------------------------------------------------------------------------------
%%% Introduction
%---------------------------------------------------------------------------------
\section{Introduction}

Over the past several years there have been a number of papers analyzing the extent to which the geometry of a finite volume orientable hyperbolic $3$-manifold $M$ is determined by its collections of proper totally geodesic subspaces. Two collections which have proven particularly important are the \textit{length spectrum} $\mathcal{L}(M)$ of $M$, which is the set of lengths of closed geodesics on $M$ considered with multiplicity, and the \textit{geometric genus spectrum} $\mathcal{GS}(M)$ of $M$ (see \cite{McReid}), which is the set of isometry classes of finite area,
properly immersed, totally geodesic surfaces of $M$ considered up to free homotopy. Neither of these sets determine the isometry class of $M$ (see \cite{Vi,McReid}). It is known however that both sets determine the commensurability class of $M$ whenever $M$ is arithmetic \cite{chinburg-geodesics,McReid}.
Recently there have been attempts to understand these phenomena in a more quantitative manner.

\textit{How many lengths does one need to determine commensurability?}
%For any $n\in \Z_{>0}$ we refer to the $n$ smallest elements of $\mathcal{L}(M)$ as the \textit{order $n$ initial length spectrum} of $M$.
For any $x\in\textbf{R}_{>0}$ we refer to the set $\{ \ell\in\mathcal{L}(M) : \ell < x\}$ as the \textit{level $x$ initial length spectrum} of $M$. In \cite{LMPT} it was shown that the commensurability class of a compact arithmetic hyperbolic $3$-manifold of volume $V$ is determined by its level $x$ initial length spectrum for any $x>ce^{\log(V)^{\log(V)}}$ for some absolute constant $c>0$. Conversely, Millichap \cite{Mill} has shown that for any $n\in \Z_{>0}$ there exist infinitely many pairwise noncommensurable hyperbolic $3$-manifolds whose length spectra begin with the same $n$ lengths.

\textit{How many surfaces of bounded area does one need to determine commensurability?}
It was shown in \cite{LMPT} that there is a constant $c>0$ such that the commensurability class of a compact arithmetic hyperbolic $3$-orbifold $M$ having volume $V$ is determined by the surfaces in $\mathcal{GS}(M)$ which have area less than $e^{cV}$, provided of course that $\mathcal{GS}(M)\neq \emptyset$.

Up to this point, there has been no analysis of the interplay between the initial length spectra and geometric genus spectra.
It is the goal of this paper to initiate a broader project of carrying out such an analysis.

Every compact totally geodesic surface $S\in \mathcal{GS}(M)$ contributes closed geodesics to $\mathcal{L}(M)$.
We define the \textit{totally geodesic length spectrum} $\mathcal{L}^{TG}(M)$ of $M$ to be the set of lengths of closed geodesics of $M$ coming from finite area, totally geodesic surfaces. A natural question therefore suggests itself: \textit{Does the totally geodesic length spectrum of $M$ determine its commensurability class?}

\begin{mainthm}\label{theorem:comm}
Let $M$ be an arithmetic hyperbolic $3$-manifold for which $\mathcal{GS}(M)\neq \emptyset$. The commensurability class of $M$ is determined by $\mathcal{L}^{TG}(M)$ along with any geodesic length in $\mathcal{L}(M)$ associated to an element of $\pi_1(M)$ which is loxodromic but not hyperbolic.
\end{mainthm}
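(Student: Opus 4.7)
The plan is to reduce the proof to two sub-problems: (i) recover the totally real subfield $F\subset k$ of index $2$ and the quaternion algebra $A/F$ from $\mathcal{L}^{TG}(M)$; (ii) use the extra loxodromic length $\ell$ to promote $(F,A)$ to the full pair $(k,B)$, which determines the commensurability class. Suppose $M$ and $M'$ satisfy the hypotheses with the same totally geodesic length spectrum and a common loxodromic-not-hyperbolic length $\ell$, and denote their invariant pairs by $(k,B)$ and $(k',B')$. Since $\mathcal{GS}(M)\neq\emptyset$, Reid's arithmetic characterization supplies a totally real subfield $F\subset k$ of index $2$ and a quaternion algebra $A/F$ with $B\cong A\otimes_F k$, and analogously for $M'$.

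For step (i), the lengths in $\mathcal{L}^{TG}(M)$ come from hyperbolic elements (real trace) of $\pi_1(M)$, whose traces lie in $F$; they form the length spectrum of an arithmetic Fuchsian group derived from an order in $A$. I would invoke the Fuchsian analogue (Reid's theorem, see also \cite{LMPT}) asserting that the length spectrum of an arithmetic Fuchsian group determines its commensurability class, thereby obtaining $(F,A)\cong(F',A')$. Henceforth identify $F'=F$ and $A'=A$.

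For step (ii), the extra length $\ell$ must pin down the quadratic extension $k/F$ among the finitely many candidates subject to the sign/ramification conditions making $k$ an admissible invariant trace field of an arithmetic Kleinian group (exactly one complex place, lying above a real place of $F$ at which $A$ splits). The element $\gamma$ producing $\ell$ has trace $t\in k\setminus F$, and its eigenvalues $\lambda,\lambda^{-1}$ (in $k$, or at worst a quadratic extension) satisfy $|\sigma(\lambda)|^2=e^\ell$, where $\sigma$ is the complex embedding of $k$. When $\lambda\in k$, the nontrivial element $\tau\in\mathrm{Gal}(k/F)$ corresponds under $\sigma$ to complex conjugation, giving the identity
\[
  e^{\ell} \;=\; |\sigma(\lambda)|^{2} \;=\; \sigma_F\!\bigl(N_{k/F}(\lambda)\bigr),
\]
where $\sigma_F=\sigma|_F$. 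So $e^\ell$ is the image, under the distinguished real place of $F$, of a relative norm from $k$.

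The main obstacle is step (ii): extracting the quadratic extension $k/F$ from the single real number $\ell$. The plan is to leverage the integrality of $\lambda$ in a quaternion order of $B$ together with analytic number theory techniques along the lines of \cite{LMPT}, bounding the possible values of the norms $N_{k/F}(\lambda)$ as both $k$ and $\lambda$ vary. Applying the identity to both manifolds gives $\sigma_F(N_{k/F}(\lambda))=\sigma_F(N_{k'/F}(\lambda'))=e^\ell$, and combined with these arithmetic constraints this should force $k=k'$; the non-generic case $\lambda\notin k$ is handled analogously inside the quadratic extension $k(\sqrt{t^2-4})$.
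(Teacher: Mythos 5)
Your decomposition has two genuine gaps, one in each step. In step (i), the premise that the lengths in $\mathcal{L}^{TG}(M)$ ``form the length spectrum of an arithmetic Fuchsian group derived from an order in $A$'' is false: $\mathcal{L}^{TG}(M)$ is the union of the length spectra of \emph{all} finite area totally geodesic surfaces of $M$, and (as the paper emphasizes after Theorem 5.1) whenever $\mathcal{GS}(M)\neq\emptyset$ these surfaces arise from infinitely many pairwise non-isomorphic quaternion algebras $B^{+}$ over $k^{+}$ with $B^{+}\otimes_{k^{+}}k\cong B$. There is no single algebra $A/F$ whose length spectrum you are seeing, and no way to separate out the sub-multiset belonging to one surface, so Reid's Fuchsian rigidity theorem cannot be invoked; the pair $(F,A)$ you want to recover is not even well defined. (Recovering $F=k^{+}$ alone is fine, since each totally geodesic length determines a real trace generating $k^{+}$, but that is much weaker than what your step (i) needs.) The paper's proof works around exactly this issue: it uses the whole collection $S=\{k^{+}(\lambda(\gamma'^2))\}$ of eigenvalue fields coming from $\mathcal{L}^{TG}(M)$, identifies $S$ with the set of maximal subfields of all the algebras $B^{+}$ with $B^{+}\otimes_{k^{+}}k\cong B$, and shows via the Albert--Brauer--Hasse--Noether theorem together with Grunwald--Wang that $\bigcap_{L\in S}\bigl[\Pp_{k^+}\setminus\mathrm{Spl}(L/k^{+})\bigr]$ is precisely the set of primes of $k^{+}$ below $\Ram_f(B)$, which by Theorem 5.1 determines $\disc_f(B)$ and hence $B$.

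In step (ii) you concede the main obstacle and never close it: the identity $e^{\ell}=\sigma_F(N_{k/F}(\lambda))$ gives a single real number, and no argument is offered (nor is one readily available by ``bounding norms'') that this forces the quadratic extension $k/F$ to be unique among the infinitely many admissible candidates. The paper's route here is short and completely different: the length determines $\tr(\gamma^{2})$ up to sign via $\cosh(\ell(\gamma^{2})/2)=\pm\tr(\gamma^{2})/2$, and Lemma 2.3 of Chinburg--Hamilton--Long--Reid (applied to the derived group $\Gamma^{(2)}$) gives $k=\Q(\tr(\gamma^{2}))$ directly, precisely because $\gamma$ is loxodromic but not hyperbolic, so its trace is non-real and generates the full invariant trace field rather than $k^{+}$. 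So the extra length is used to recover $k$ outright, not to select a quadratic extension of a previously recovered $F$. As written, your proposal would need both a replacement for the misapplied Fuchsian rigidity in step (i) and an actual argument in step (ii) before it constitutes a proof.
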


One might similarly ask for the distribution of the set $\mathcal{L}^{TG}(M)$ within $\mathcal{L}(M)$. As a first step we consider the following question:
\textit{Do there exist arithmetic hyperbolic 3-orbifolds with many ``short'' geodesics not lying on totally geodesic surfaces?}

\begin{mainthm}\label{theorem:theorem1}
For every integer $n\geq 1$ there is a positive real number $x_0$ such that the number of pairwise incommensurable arithmetic hyperbolic $3$-orbifolds $M$ of volume less than $V$ satisfying
\begin{enumerate}
\item $M$ has infinitely many totally geodesic surfaces, and
\item there are at least $n$ primitive geodesics on $M$ of length less than $x_0$, none of which lie on a totally geodesic surface
\end{enumerate}
is $\gg V^{1/2}/(\log{V})^{1-\frac{1}{2^{2n+1}}}$ as $V\to\infty$, where the implied constant depends on $n$. In fact, for any $\epsilon>0$ there is a constant $c_\epsilon>0$ such that one may take $x_0=c_\epsilon n^{16+\epsilon}$, hence for sufficiently large $n$ one may take $x_0=n^{17}$.
\end{mainthm}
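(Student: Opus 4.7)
The plan is to produce, for each target $V$, a large family of pairwise incommensurable compact arithmetic hyperbolic $3$-orbifolds of volume $\le V$, each with infinitely many totally geodesic surfaces (handling (i)) and with at least $n$ short primitive closed geodesics that do not lie on any such surface (handling (ii)). The family will be parametrized by quaternion algebras $B(D) = B_0(D) \otimes_\Q k$ over a fixed imaginary quadratic field $k = \Q(i)$, where $B_0(D)$ is an indefinite quaternion $\Q$-algebra whose finite ramification set consists of the prime factors of a carefully chosen squarefree integer $D$. Extension of scalars automatically embeds the Fuchsian group $B_0(D)^1$ into $B(D)^1$, supplying infinitely many totally geodesic surfaces; the work is to engineer $D$ so that $B(D)$ also realizes $n$ distinguished off-TGS traces while keeping the volume $\le V$ and giving many different $D$'s.

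First I would select, once and for all, $n$ traces $\tau_1, \ldots, \tau_n \in \mathcal{O}_k \setminus \Z$ with $\alpha_i := \tau_i^2 - 4 \in k \setminus \Q$ chosen so that the $2n$ elements $\{\alpha_i, \bar\alpha_i\}$ are multiplicatively independent in $k^\times/(k^\times)^2$. A counting argument --- bounding the set of $n$-tuples satisfying any one of the $\le 2^{2n}-1$ nontrivial $\mathbf{F}_2$-dependency relations against the $\asymp M^n$ candidate tuples with $\max_i|\tau_i|^2 \le M$ --- produces such a tuple with $\max_i|\tau_i|^2 \le c_\epsilon n^{16+\epsilon}$. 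Since the translation length of a loxodromic element with trace $\tau$ is $O(\log|\tau|^2)$, the geodesic coming from $\tau_i$ has length at most $x_0 = c_\epsilon n^{16+\epsilon}$.

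Set $E = k(\sqrt{\alpha_1}, \sqrt{\bar\alpha_1}, \ldots, \sqrt{\alpha_n}, \sqrt{\bar\alpha_n})$. Multiplicative independence forces $[E:k] = 2^{2n}$, and complex conjugation swapping $\sqrt{\alpha_i} \leftrightarrow \sqrt{\bar\alpha_i}$ makes $E/\Q$ Galois of degree $2^{2n+1}$. Applying Chebotarev to $E/\Q$, the set $\mathcal{P}$ of rational primes $p$ satisfying (a) $p = \pp\bar\pp$ splits in $k$ and (b) neither $\pp$ nor $\bar\pp$ splits in any $L_i := k(\sqrt{\alpha_i})$ has density exactly $1/2^{2n+1}$: one bit from (a) and two bits per $i$ from (b). For each squarefree $D \le cV^{1/2}$ with $\omega(D)$ even and every prime factor in $\mathcal{P}$, let $B_0(D)$ be the indefinite quaternion $\Q$-algebra ramified at the prime divisors of $D$, and put $B(D) = B_0(D) \otimes_\Q k$. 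Since every $p \mid D$ splits in $k$, $B(D)$ is ramified at $2\omega(D)$ primes of residue degree $1$, so $\disc(B(D)) = D^2$ and the orbifold $M(D)$ associated to a maximal order of $B(D)$ has $\Vol(M(D)) \asymp_d D^2 \le V$.

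Condition (b) on $\mathcal{P}$ is precisely the Eichler local embedding criterion for each $L_i/k$ to embed in $B(D)/k$, so each $\tau_i$ appears as the reduced trace of a norm-one element of a maximal order, giving a primitive closed geodesic on $M(D)$ of length $\le x_0$; since $\tau_i \notin \Q$, this geodesic is off every totally geodesic surface. Different $D$'s produce non-isomorphic $B_0(D)$'s (hence $B(D)$'s) and so pairwise incommensurable orbifolds. The Landau--Wirsing asymptotic for squarefree integers with all prime factors in a density-$\delta$ set gives $\gg X/(\log X)^{1-\delta}$ eligible $D \le X$; taking $X = cV^{1/2}$ and $\delta = 1/2^{2n+1}$ yields the claimed $\gg V^{1/2}/(\log V)^{1 - 1/2^{2n+1}}$. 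The delicate step throughout is the trace selection --- producing $n$ traces with the required $\mathbf{F}_2$-independence and polynomial norm; the exponent $16+\epsilon$ reflects how much room is needed within a norm-bounded region of $\mathcal{O}_k$ to avoid all $2^{2n}-1$ potential square-relations among the $\alpha_i$ and their conjugates.
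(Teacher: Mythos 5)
Your overall architecture is essentially the paper's: work over a fixed imaginary quadratic $k$, take quaternion algebras $B=B_0\otimes_{\Q}k$ ramified only at primes above split rational primes (so that Fuchsian subgroups, hence by \cite[Theorems 9.5.4, 9.5.5]{MR} infinitely many totally geodesic surfaces, appear), force $n$ ``independent'' quadratic extensions of $k$ to embed, get the density $2^{-(2n+1)}$ from Chebotarev, count squarefree discriminants by a Wirsing/mean-value theorem, and bound volume by Borel's formula. The genuine gap is the step ``since $\tau_i\notin\Q$, this geodesic is off every totally geodesic surface.'' A non-real trace only says $\gamma_i$ itself is not hyperbolic; the criterion actually needed (Proposition \ref{prop:notreal}) is that \emph{no power} of $\gamma_i$ is hyperbolic, equivalently no power of its eigenvalue is real. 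A loxodromic element whose eigenvalue is a real number times a root of unity has non-real trace yet a hyperbolic power, and its geodesic can certainly lie on a totally geodesic surface (compose a hyperbolic element of a Fuchsian subgroup with a finite-order rotation about its axis). To repair this you must use that $k(\lambda_i)=L_i$ with $L_i\neq L_i'$ --- which your independence hypothesis does give, since it forces $\alpha_i\bar\alpha_i\notin (k^\times)^2$, i.e.\ $L_i/\Q$ non-Galois --- and then conclude as the paper does via Lemma 2.3 of \cite{chinburg-geodesics} together with Proposition \ref{prop:notreal} (or directly: $k$ is the unique quadratic subfield of the non-Galois quartic $L_i$, so $L_i\cap\R=\Q$, and a real power of the unit $\lambda_i$ would be $\pm1$, impossible since $|\lambda_i|>1$). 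As written the inference is false, and it is exactly the point of condition (ii).

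A second, quantitative, gap: the trace-selection claim $\max_i|\tau_i|^2\le c_\epsilon n^{16+\epsilon}$ does not follow from the sketched union bound. There are $2^{2n}-1$ square-class relations, and for each you must bound the number of bad $\tau_i$ (these are Pell-type and ``norm form equals a fixed square class'' conditions, which require an actual argument); even granting strong per-relation bounds, exponentially many relations force the box size $M$ to be exponential in $n$, not polynomial. This does not sink your route, because lengths are $O(\log|\tau_i|)$: even $|\tau_i|\le e^{Cn}$ gives $x_0=O(n)$, well inside $c_\epsilon n^{16+\epsilon}$, and indeed a corrected version of your construction (greedy selection, or the paper's sieve idea of forcing a designated split prime to divide $\tau_i^2-4$ exactly once) would yield a sharper $x_0$ than the paper's, whose $n^{16+\epsilon}$ comes from bounding heights of fundamental units by $\Reg_{L_i}\ll|\Delta_{L_i}|^2$. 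Two further points you gloss: realizing $\tau_i$ as a reduced trace in a \emph{maximal order} is a selectivity statement, needing \cite{Chinburg-Friedman-selectivity} (harmless here since your $B(D)$ is ramified at at least two finite primes, but excluding $D=1$ and $\omega(D)=0$ from the count), and the parity restriction $\omega(D)$ even, plus the volume upper bound via Borel, should be stated as only costing a bounded factor in the final count.
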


Suppose that $M$ is an arithmetic hyperbolic $3$-orbifold with geodesics of lengths $\ell_1,\dots,\ell_n$. Theorems 4.9 and 4.10 of \cite{LMPT} provide lower bounds for the number of incommensurable arithmetic hyperbolic $3$-orbifolds with volume less than $V$ and which all contain geodesics of lengths $\ell_1,\dots,\ell_n$. One might therefore expect to be able to deduce Theorem \ref{theorem:theorem1} from these results. There is a nuance however; none of the arithmetic hyperbolic $3$-orbifolds counted by Theorems 4.9 and 4.10 of \cite{LMPT} are guaranteed to contain any totally geodesic surfaces. Indeed, a ``random'' arithmetic hyperbolic $3$-manifold is likely not to contain any totally geodesic surfaces. In order to circumvent these difficulties we use the well-known construction of arithmetic hyperbolic $3$-manifolds from quaternion algebras defined over number fields to reduce the proof of Theorem \ref{theorem:theorem1} to a series of problems which can be handled using techniques from analytic number theory. Among the techniques that we employ are mean value estimates for multiplicative functions and the linear sieve.

Our final result concerns a question dual to the one addressed in Theorem \ref{theorem:theorem1}: \textit{Do there exist arithmetic hyperbolic $3$-orbifolds whose ``short'' geodesics come primarily from distinct totally geodesic surfaces?}

\begin{mainthm}\label{theorem:theorem2}
For every integer $n\geq 1$ there are positive real numbers $x_0, x_1$ such that the number of pairwise incommensurable arithmetic hyperbolic $3$-orbifolds of volume less than $V$ with at least $n$ geodesics of length at most $x_0$ lying on pairwise incommensurable totally geodesic surfaces of area at most $x_1$ is $\gg n^{-cn} V^{\frac{2}{3}}$ as $V\to\infty$, for some constant $c>0$. In fact, there are constants $c_1,c_2,c_3>0$ such that we may take $x_0=c_1\left(n\log(2n)\right)^{2}$ and
$x_1=c_2n^{c_3n}$.
\end{mainthm}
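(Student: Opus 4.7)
The plan is to construct, for each imaginary quadratic field $k=\Q(\sqrt{-d})$ drawn from a positive-density family of $d\leq cV^{2/3}$, a compact arithmetic hyperbolic $3$-orbifold $M_d$ of volume $\asymp d^{3/2}$ which contains $n$ pairwise incommensurable totally geodesic Shimura surfaces of controlled area. Since the invariant trace field $k=\Q(\sqrt{-d})$ is a commensurability invariant, distinct $d$ yield incommensurable orbifolds, so producing the required lower bound reduces to bounding the density of admissible $d$.

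Fix $m=\lceil\log_2(2n)\rceil+1$ and let $p_1,p_2,q_1,\ldots,q_m$ be the smallest $m+2$ primes. For each even-cardinality subset $U'\subseteq\{q_1,\ldots,q_m\}$, let $B_0^{(U')}$ be the (unique up to $\Q$-isomorphism) indefinite rational quaternion algebra with $\Ram(B_0^{(U')})=\{p_1,p_2\}\cup U'$. Varying $U'$ gives $2^{m-1}\geq n$ pairwise non-isomorphic indefinite quaternion algebras over $\Q$, hence $\geq n$ pairwise incommensurable Shimura curves $X_{B_0^{(U')}}$, each of area $\ll p_1p_2\prod_{q\in U'}q = O(n^{O(\log n)})\leq x_1$. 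The standard ball-volume inequality in $\bfH^2$ (any hyperbolic $2$-orbifold of area $A$ has systole $\leq 2\log A+O(1)$) then produces on each $X_{B_0^{(U')}}$ a closed geodesic of length $\ll \log x_1 \ll n\log n \leq x_0$.

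Let $\mathcal{D}$ be the set of positive squarefree integers $d$ such that $p_1,p_2$ split in $k=\Q(\sqrt{-d})$ and every $q_j$ is inert in $k$. These are congruence conditions on $d$ modulo $4p_1p_2q_1\cdots q_m$, so by quadratic reciprocity and a routine sieve one has $|\mathcal{D}\cap[1,X]|\gg X$ with implicit constant $\gg 2^{-(m+2)}\gg n^{-O(1)}$. For $d\in\mathcal{D}$, base changing to $k$ yields $B_0^{(U')}\otimes_{\Q} k\cong B$ for a single quaternion algebra $B$ over $k$, ramified precisely at the four primes of $k$ above $p_1,p_2$: each split $p_i$ doubles its ramification under base change, while at each inert $q_j$ the local unramified quadratic extension splits the division algebra and erases its ramification. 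Since $\Ram(B)\neq\emptyset$, $B$ is a division algebra and $M_d=(\mathcal{O}_B)^1\backslash\bfH^3$ (for $\mathcal{O}_B$ a maximal order) is a compact arithmetic hyperbolic $3$-orbifold into which every $X_{B_0^{(U')}}$ embeds totally geodesically. Borel's volume formula gives $\Vol(M_d)\asymp |\disc(k)|^{3/2}\,\zeta_k(2)\,(p_1p_2)^2 \asymp (p_1p_2)^2\,d^{3/2}$, so $\Vol(M_d)\leq V$ whenever $d\leq c'V^{2/3}$; combining with the density estimate gives at least $\gg V^{2/3}/n^{O(1)}\gg n^{-cn}V^{2/3}$ admissible $d$, and distinct $d$ produce distinct invariant trace fields, hence incommensurable $M_d$.

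The main obstacle is the coordinated management of arithmetic data: all $2^{m-1}$ indefinite rational quaternion algebras must be simultaneously realisable as $\Q$-subalgebras of a single $B/k$ (forcing the prescribed splitting of the $p_i$ and inertness of the $q_j$ in $k$), while both the area of every Shimura curve and the volume of the ambient orbifold remain small enough to meet the stated bounds on $x_1$ and $V$. Once the primes $p_i,q_j$ are selected as the smallest available choices, every requirement reduces to a congruence condition on $d$ modulo a fixed integer depending only on $n$, and the remaining ingredients---Borel's volume formula, the arithmetic correspondence between rational quaternion subalgebras of $B$ and totally geodesic surfaces in $M_d$, and the Chebotarev/reciprocity density estimate---are standard. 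The only delicate task is tracking constants carefully enough to extract the explicit polynomial-in-$n$ bounds on $x_0$ and the exponential-in-$n$ bound on $x_1$ claimed in the theorem.
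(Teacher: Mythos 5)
Your overall architecture is parallel to the paper's (rational quaternion algebras base-changed to a varying imaginary quadratic field $k$, Borel's volume formula, and a density count of admissible $k$), but your key lemmas are genuinely different: instead of the paper's $n+1$ auxiliary primes produced by Linnik's theorem with prescribed splitting in $n$ real quadratic fields $L_i$ (whose fundamental units furnish the short geodesics, and Wood's theorem furnishes the field count), you use only $m\approx\log_2 n$ of the smallest primes together with the even-subset trick to get $2^{m-1}\ge n$ pairwise non-isomorphic indefinite algebras with a common base change, a soft systole-versus-area bound for the short geodesics, and a direct congruence/density estimate for the fields. If the details go through, this is actually more efficient than the paper (losses polynomial in $n$ rather than $n^{cn}$, and much smaller $x_0,x_1$). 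However, there are genuine gaps as written. First, the asserted ``standard ball-volume inequality'' bounding the systole of a hyperbolic $2$-\emph{orbifold} by $2\log A+O(1)$ is only standard for manifolds: the short essential loop produced by the volume argument may encircle a cone point and thus correspond to an elliptic, not hyperbolic, element, so for Shimura \emph{curves} (which have torsion) you must either prove an orbifold systole bound or, as the paper does, construct short hyperbolic elements arithmetically (e.g.\ from units of small height in a real quadratic field embedding in $B_0^{(U')}$, which is also what produces the stated shape $x_0=c_1(n\log 2n)^2$). Second, you never ensure that the $n$ geodesics are distinct: a single closed geodesic of $M_d$ can lie on several of your pairwise incommensurable surfaces (any hyperbolic element preserves every plane through its axis), and nothing in your construction rules out that the chosen systoles coincide in $M_d$; the paper avoids this because its $i$th geodesic has eigenvalue field exactly $L_i$, and the $L_i$ are distinct.

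A smaller point you gloss over, which the paper treats explicitly, is the simultaneous realization of all the Fuchsian groups inside one Kleinian group: to get both the area bound $x_1$ and the geodesics into a \emph{single} orbifold $M_d=\mathbf{H}^3/\Gamma^1_{\mathcal O}$, you need one maximal order $\mathcal O$ of $B$ such that, for every $U'$, $\mathcal O\cap B_0^{(U')}$ is (conjugate to) a maximal order of $B_0^{(U')}$; otherwise the immersed surface corresponding to $B_0^{(U')}$ is only known to be commensurable with $X_{B_0^{(U')}}$ and its area and short geodesics are not controlled by your computation. The paper handles this by arranging $\mathcal O_{B_i}\otimes_{\mathbf Z}\mathcal O_k$ to be a common maximal order after conjugation; your proposal needs the analogous statement for all $2^{m-1}$ subalgebras at once. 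With the systole step replaced by the unit/height construction, the distinctness of the geodesics enforced (e.g.\ by attaching a different quadratic eigenvalue field to each surface), and the order-compatibility spelled out, your variant would yield the theorem, indeed with better constants.
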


The proof of Theorem \ref{theorem:theorem2} requires us to understand how often a given finite set of primes splits in a prescribed way, across the family of quadratic number fields. While this particular problem could be treated by ad hoc methods, recent work has shown that such problems are best understood in the broader context of the field of arithmetic statistics. In our work, we appeal to powerful recent results of Wood \cite{wood10}, which address such statistical prime splitting questions in great generality.

%These results are part of a broader philosophy of analyzing the interplay between closed geodesics and and codimension one totally geodesics subspaces of hyperbolic orbifolds.

%---------------------------------------------------------------------------------
%%% Notation
%---------------------------------------------------------------------------------

\section{Notation}

Throughout this paper $k$ will denote a number field with ring of integers $\mathcal O_k$, degree $n_k$ and discriminant $\Delta_k$. We will denote by $r_1(k)$ the number of real places of $k$ and by $r_2(k)$ the number of complex places of $k$. We will denote by $\Pp_k$ the set of prime ideals of $k$ and, given $\mathfrak{p}\in\Pp_k$, by $\abs{\mathfrak{p}}$ the norm of $\mathfrak{p}$. The Dedekind zeta function of $k$ will be denoted by $\zeta_k(s)$ and the regulator of $k$ by $\Reg_k$. If $L/k$ is a finite extension of number fields we will denote by $\Delta_{L/k}$ the relative discriminant.

Given a number field $k$ and quaternion algebra $B$ over $k$, we denote by $\Ram(B)$ the set of primes (possibly infinite) of $k$ which ramify in $B$ and by $\Ram_f(B)$ (respectively $\Ram_\infty(B)$) the subset of $\Ram(B)$ consisting of those finite (respectively infinite) primes of $k$ ramifying in $B$. The discriminant $\disc(B)$ of $B$ is defined to be the product of all primes (possibly infinite) in $\Ram(B)$. We define $\disc_f(B)$ similarly.

We denote by $\textbf{H}^2$ and $\textbf{H}^3$ real hyperbolic space of dimension $2$ and $3$. Throughout this paper $M$ will denote an arithmetic hyperbolic $3$-manifold and $\Gamma=\pi_1(M)$ the associated arithmetic lattice in $\PSL_2(\textbf{C})$. We will refer to lattices in $\PSL_2(\textbf{C})$ as Kleinian and lattices in $\PSL_2(\textbf{R})$ as Fuchsian.

We will make use of standard analytic number theory notation. We will interchangeably use the Landau ``Big Oh'' notation, $f=O(g)$, and the Vinogradov notation, $f\ll g$, to indicate that there exists a constant $C>0$ such that $|f| \leq C |g|$. We write $f\sim g$ if $\lim_{x\to\infty}\frac{f(x)}{g(x)}=1$ and $f=o(g)$ if $\lim_{x\to\infty}\frac{f(x)}{g(x)}=0$. Finally, throughout this paper we denote by $\log(x)$ the natural logarithm.

%---------------------------------------------------------------------------------
%%% Constructing arithmetic hyperbolic $2$- and $3$-manifolds
%---------------------------------------------------------------------------------

\section{Constructing arithmetic hyperbolic $2$- and $3$-manifolds}

In this section we briefly review the construction of arithmetic lattices in $\PSL_2(\textbf{R})$ and $\PSL_2(\textbf{C})$. For a more detailed treatment we refer the reader to Maclachlan and Reid \cite{MR}.

We begin by describing the construction of arithmetic hyperbolic surfaces. Let $k$ be a totally real field and $B$ a quaternion algebra defined over $k$ which is unramified at a unique real place $v$ of $k$. Let $\mathcal O$ be a maximal order of $B$, $\mathcal O^1$ be the multiplicative subgroup consisting of those elements of $\mathcal O^*$ having reduced norm $1$ and $\Gamma_{\mathcal O}^1$ be the image in $\PSL_2(\textbf{R})$ of $\mathcal{O}^1$ under the identification $B_v=B\otimes_k k_v\cong \mathrm{M}_2(\textbf{R})$. The group $\Gamma_{\mathcal O}^1$ is a discrete finite coarea subgroup of $\PSL_2(\textbf{R})$ which is cocompact whenever $B$ is a division algebra. If $\Gamma$ is a lattice in $\PSL_2(\textbf{R})$ then we say that $\Gamma$ is an {\em arithmetic Fuchsian group} if there exist $k,B,\mathcal O$ such that $\Gamma$ is commensurable in the wide sense with $\Gamma_{\mathcal O}^1$.

The construction of arithmetic hyperbolic $3$-manifolds is extremely similar. Let $k$ be a number field which has a unique complex place $v$ and $B$ be a quaternion algebra over $k$ in which all real places of $k$ ramify. Let $\mathcal O$ be a maximal order of $B$ and $\Gamma_{\mathcal O}^1$ be the image in $\PSL_2(\textbf{C})$ of $\mathcal{O}^1$ under the identification $B_v=B\otimes_k k_v\cong \mathrm{M}_2(\textbf{C})$. The group $\Gamma_{\mathcal O}^1$ is a discrete finite covolume subgroup of $\PSL_2(\textbf{C})$ which is cocompact whenever $B$ is a division algebra. If $\Gamma$ is a lattice in $\PSL_2(\textbf{C})$ then we say that $\Gamma$ is an {\em arithmetic Kleinian group} if there exist $k,B,\mathcal O$ such that $\Gamma$ is commensurable in the wide sense with $\Gamma_{\mathcal O}^1$. An arithmetic Kleinian group $\Gamma$ is said to be {\em derived from a quaternion algebra} if it is contained in a group of the form $\Gamma_{\mathcal O}^1$.

Given arithmetic lattices $\Gamma_1,\Gamma_2$ arising from quaternion algebras $B_1, B_2$ defined over $k_1, k_2$, we note that $\Gamma_1$ and $\Gamma_2$ are commensurable in the wide sense if and only if $k_1\cong k_2$ and $B_1\cong B_2$ (see \cite[Theorem 8.4.1]{MR}). We will make crucial use of this fact many times in this paper.

%---------------------------------------------------------------------------------
%%% Geometry Background
%---------------------------------------------------------------------------------

\section{Geometry Background}

An element $\gamma\in \PSL_2(\C)$ is \textit{loxodromic} if $\mathrm{tr}\, \gamma\in \C\setminus[-2,2]$ and is \textit{hyperbolic} if $\mathrm{tr}\, \gamma\in \R\setminus[-2,2]$.
%Geometrically, a loxodromic element $\gamma$ translates $\mathbf{H}^3$ along an axis that we denote $A_\gamma$.
Geometrically, a loxodromic element $\gamma$ acts on $\mathbf{H}^3$ by translating along, and possibly rotating around,  an axis that we denote $A_\gamma$.
%For a given totally geodesic hyperplane  $\widetilde{S}\subset \mathbf{H}^3$, we let $\mathrm{Stab}_{\Gamma}(\widetilde{S})$ denote the collection of elements of $\Gamma$ stabilizing $\widetilde{S}$, and $\mathrm{Stab}^+_{\Gamma}(\widetilde{S})$ the subset of those that preserve the orientation of  $\widetilde{S}$.
%The property of being hyperbolic has the following reformulation in terms of its action on totally geodesic hyperplanes containing its axis.

We now give a characterization of a hyperbolic element in terms of its action on the totally geodesic hyperplanes containing its axis.

\begin{lem}\label{lem:list}
%Let $\Gamma$ be an arithmetic Kleinian group and
If $\gamma\in \PSL_2(\C)$ is a loxodromic element, then the following are all equivalent:
\begin{enumerate}
\item $\gamma$ is hyperbolic,
\item $\gamma$ has only real eigenvalues,
\item $\gamma$ stabilizes and preserves the orientation of a totally geodesic hyperplane containing $A_\gamma$,
\item $\gamma$ stabilizes and preserves the orientation of all totally geodesic hyperplanes containing $A_\gamma$.
\end{enumerate}
\end{lem}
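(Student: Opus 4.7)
The plan is to establish $(1)\Leftrightarrow(2)$ by linear algebra, and then reduce $(2)\Leftrightarrow(3)\Leftrightarrow(4)$ to a computation after conjugating $\gamma$ into a convenient normal form.

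For $(1)\Leftrightarrow(2)$, lift $\gamma$ to $\SL_2(\C)$ and let $\lambda,\lambda^{-1}$ be its eigenvalues, well defined up to a common sign; in particular the condition ``$\lambda\in\R$'' makes sense in $\PSL_2(\C)$. The loxodromic hypothesis forces $|\lambda|\neq 1$, and $\lambda$ is a root of $X^2-(\tr\gamma)X+1$. If $\lambda\in\R$, then $\tr\gamma=\lambda+\lambda^{-1}\in\R$, with $|\tr\gamma|>2$ by AM--GM; conversely, if $\tr\gamma=t\in\R$, the quadratic formula gives $\lambda=(t\pm\sqrt{t^2-4})/2$, and loxodromicity rules out $|t|\leq 2$ (which would force $|\lambda|=1$), so $\lambda\in\R$.

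For the geometric statements, conjugate in $\PSL_2(\C)$ so that $A_\gamma$ is the vertical geodesic from $0$ to $\infty$ in the upper half-space model of $\mathbf{H}^3$; then $\gamma$ is represented by $\mathrm{diag}(\lambda,\lambda^{-1})$ and acts on $\partial\mathbf{H}^3\cong\widehat{\C}$ by $z\mapsto\lambda^2 z$. The totally geodesic hyperplanes containing $A_\gamma$ correspond bijectively to unoriented lines through $0$ in $\C$, namely the sets $\R e^{i\theta}$ with $\theta\in\R/\pi\Z$. Such a line is mapped to $\lambda^2\cdot\R e^{i\theta}$, hence is stabilized if and only if $\lambda^2\in\R$. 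When $\lambda^2>0$ (equivalently $\lambda\in\R$) the induced action on each such hyperplane is a pure loxodromic translation along $A_\gamma$, which is orientation-preserving; when $\lambda^2<0$ (equivalently $\lambda\in i\R$) the action combines translation along $A_\gamma$ with a reflection across $A_\gamma$, which a direct Jacobian computation shows to be orientation-reversing; and when $\lambda^2\notin\R$, no such hyperplane is stabilized at all. So the existence of even one orientation-preservingly stabilized hyperplane already forces $\lambda\in\R$, and conversely $\lambda\in\R$ forces every hyperplane through $A_\gamma$ to be preserved with orientation.

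Combining gives the equivalences: $(4)\Rightarrow(3)$ is tautological, $(3)\Rightarrow(2)$ and $(2)\Rightarrow(4)$ come from the trichotomy of the previous paragraph, and $(1)\Leftrightarrow(2)$ from the first paragraph. The main delicacy is the $\lambda\in i\R$ case: in $\PSL_2(\C)$ this is a genuine loxodromic element with $\lambda^2\in\R$ that stabilizes every hyperplane through $A_\gamma$ as a set, so the key content of the lemma is that orientation precisely separates it from the hyperbolic case. Verifying that the reflection component in this case indeed reverses orientation on the invariant $\mathbf{H}^2$ is therefore the one non-formal step.
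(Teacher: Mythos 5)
Your proof is correct, and it reaches the geometric equivalences by a different route than the paper. For $(i)\Leftrightarrow(ii)$ you argue exactly as the paper does, from $\lambda_1\lambda_2=1$ and $\lambda_1+\lambda_2=\tr\gamma$ (your extra verification that $|\tr\gamma|>2$ is harmless but redundant: once the trace is real, loxodromicity already places it in $\R\setminus[-2,2]$). For the rest, the paper argues synthetically: a hyperbolic element parallel transports normal vectors along $A_\gamma$, and a hyperplane through $A_\gamma$ is determined by the axis together with a normal direction, giving $(i)\Rightarrow(iv)\Rightarrow(iii)$, while for $(iii)\Rightarrow(i)$ it notes that an invariant, orientation-preserved hyperplane forces the translation along $A_\gamma$ to have no rotational part. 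You instead conjugate to the normal form $\diag(\lambda,\lambda^{-1})$ in the upper half-space model and read everything off the boundary action $z\mapsto\lambda^2 z$ on lines through the origin. What your computation buys is that it makes explicit the trichotomy the paper leaves implicit: if $\lambda^2\notin\R$ no hyperplane through $A_\gamma$ is invariant, while in the borderline case $\lambda\in i\R$ (screw motion with rotation by $\pi$) every such hyperplane is setwise invariant but the restriction is a glide reflection, so only the orientation hypothesis in $(iii)$ excludes it; this is precisely the case buried in the paper's phrase that $\gamma$ ``has no rotation.'' The paper's argument is shorter and coordinate-free; yours is more self-contained and checkable by direct calculation, and correctly identifies the orientation condition as the essential content of the lemma.
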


\begin{proof}
Let $\lambda_1$ and $\lambda_2$ denote the eigenvalues of $\gamma$.  It is an immediate consequence of the equations $\lambda_1\lambda_2=1$ and $\lambda_1+\lambda_2= \mathrm{tr}\, \gamma$ that $(i)\Leftrightarrow(ii)$.
Assuming $(i)$, $\gamma$ parallel transports tangent vectors along $A_\gamma$.
Since a totally geodesic hyperplane $\widetilde{S}\subset \mathbf{H}^3$ containing $A_\gamma$ is completely determined by $A_\gamma$ and a normal direction,
%Any isometry of $\widetilde{S}$ stabilizing $A_\gamma$ must also stabilize this normal direction.
%A loxodromic element $\gamma$ parallel transports tangent vectors along its axis if and only if it is hyperbolic.
it follows that $(i)\Rightarrow(iv)\Rightarrow(iii)$.
Lastly, assuming $(iii)$, it follows that as $\gamma$ translates along  $A_\gamma$, it has no rotation, and hence must be hyperbolic, implying $(i)$.
\end{proof}

\begin{lem}\label{lem:nosharedaxis}
Let $\Gamma$ be an arithmetic Kleinian group and $\gamma\in \Gamma$ be a loxodromic element.
If $A_\gamma$ is an axis of a hyperbolic element in $\Gamma$, then
 there exists an integer $n\ge1$ such that $\gamma^n$ hyperbolic.
\end{lem}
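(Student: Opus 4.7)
The plan is to analyze the stabilizer of the oriented axis $A_\gamma$ in $\Gamma$ as a discrete subgroup of a two-dimensional abelian Lie group, and then to carry out a short algebraic computation in that subgroup.

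Let $\delta \in \Gamma$ be a hyperbolic element with $A_\delta = A_\gamma$. After replacing $\delta$ by $\delta^{-1}$ if necessary, both $\gamma$ and $\delta$ induce the same orientation on the common axis, and hence both lie in the maximal torus $T \subset \mathrm{PSL}_2(\mathbf{C})$ that fixes the oriented axis $A_\gamma$. As a real Lie group $T \cong \mathbf{R} \times S^1$, where the $\mathbf{R}$-factor records signed translation length along $A_\gamma$ and the $S^1$-factor records rotation about it. The first step is to show that $H := \Gamma \cap T$ has the form $\mathbf{Z}/m \oplus \mathbf{Z}$ for some integer $m \ge 1$. Lifting $H$ to a discrete subgroup $\widetilde{H} \subset \mathbf{R}^2$ containing $\{0\}\times 2\pi\mathbf{Z}$ and invoking the classification of discrete subgroups of $\mathbf{R}^2$: either $\widetilde{H}$ has rank one (in which case it is purely rotational, contradicting that $\gamma \in H$ is loxodromic) or rank two; in the latter case a suitable basis has the form $v_1 = (0, 2\pi/m)$, $v_2 = (t,\theta)$ with $t > 0$, and passing to the quotient by $\{0\}\times 2\pi\mathbf{Z} = m v_1 \mathbf{Z}$ yields $H \cong \mathbf{Z}/m \oplus \mathbf{Z}$.

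Choose generators accordingly: let $\zeta$ generate the torsion subgroup (a primitive rotation of order $m$ about $A_\gamma$) and let $\eta$ be a primitive loxodromic generator of the $\mathbf{Z}$-factor. Write $\gamma = \zeta^{p_1}\eta^{a_1}$ and $\delta = \zeta^{p_2}\eta^{a_2}$ with $a_1, a_2 \neq 0$. Since $H$ is abelian, a direct computation gives
\[
\gamma^{a_2}\delta^{-a_1} = \zeta^{p_1 a_2 - p_2 a_1},
\]
an element of finite order $m' \le m$. Raising this identity to the $m'$-th power and again using commutativity yields $\gamma^{a_2 m'} = \delta^{a_1 m'}$. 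By Lemma~\ref{lem:list}, hyperbolicity of $\delta$ means its eigenvalues are real with modulus $\ne 1$; any nonzero integer power has the same property and is again hyperbolic. Thus $\gamma^{a_2 m'}$ is hyperbolic, and taking $n = |a_2 m'|$ (after replacing $\gamma$ by $\gamma^{-1}$ if $a_2 < 0$, which does not change the axis) furnishes the required exponent $n \ge 1$.

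The main conceptual step is pinning down the structure $H \cong \mathbf{Z}/m \oplus \mathbf{Z}$; the rest is formal and uses only commutativity together with the characterization of hyperbolicity in Lemma~\ref{lem:list}. Notably, the argument nowhere uses arithmeticity of $\Gamma$, only its discreteness in $\mathrm{PSL}_2(\mathbf{C})$.
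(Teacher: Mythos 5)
Your proof is correct and takes essentially the same approach as the paper's: both exploit discreteness of the abelian stabilizer of the common axis to show that a suitable power of $\gamma$ differs from a power of the hyperbolic element $\delta$ only by a torsion (elliptic) element, which is then eliminated by raising to a further power. Your explicit identification of the stabilizer as $\mathbf{Z}/m\oplus\mathbf{Z}$ simply makes structural the paper's two appeals to discreteness (rationally related translation lengths, and elliptic elements being torsion).
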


\begin{proof}
Suppose that $A_\gamma$ is also the axis of a hyperbolic element $\delta\in \Gamma$.
Since $\Gamma$ acts discretely, it must be the case that the translation lengths of $\gamma$ and $\delta$ are rational multiples of one another.
In particular, there exist positive integers $a$ and $b$ and an elliptic element $\varepsilon\in \Gamma$ such that $\gamma^b=\varepsilon\delta^a$.
Again by the discreteness of $\Gamma$, elliptic elements in $\Gamma$ are torsion, hence $\varepsilon$ has finite order $c$.
Therefore  $\gamma^{bc}=\delta^{ac}$, which is hyperbolic.
\end{proof}

The following result will play a crucial role in the proofs of Theorems \ref{theorem:theorem1} and \ref{theorem:theorem2}.

\begin{prop}\label{prop:notreal}
Let $\Gamma$ be an arithmetic Kleinian group and $\gamma\in \Gamma$ be a loxodromic element.
If there does not exist an integer $n\ge1$ such that $\gamma^n$ is hyperbolic,
then the geodesic associated to $\gamma$ lies in no finite area, totally geodesic surface of $\mathbf{H}^3/\Gamma$.
\end{prop}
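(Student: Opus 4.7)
The plan is to argue by contradiction. Assume the closed geodesic associated to $\gamma$ lies on some finite area, totally geodesic surface $S \subset \mathbf{H}^3/\Gamma$. I will produce a hyperbolic element of $\Gamma$ whose axis equals $A_\gamma$, and then invoke Lemma~\ref{lem:nosharedaxis} to conclude that $\gamma^n$ is hyperbolic for some $n \ge 1$, contradicting the hypothesis.

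First I would lift $S$ to a totally geodesic hyperplane $\widetilde{S} \subset \mathbf{H}^3$ chosen so as to contain $A_\gamma$, and let $\Delta$ denote the setwise stabilizer of $\widetilde{S}$ in $\Gamma$. Since $S$ has finite area, $\Delta$ acts on $\widetilde{S} \cong \mathbf{H}^2$ as a lattice, with $S = \Delta \backslash \widetilde{S}$ realized as the relevant immersed surface in $\mathbf{H}^3/\Gamma$. The axis $A_\gamma$ descends under this quotient to a closed geodesic on $S$, and this forces the $\Delta$-stabilizer of $A_\gamma$ to contain a loxodromic element $\delta$ whose axis is exactly $A_\gamma$.

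Now $\delta \in \Gamma \subset \PSL_2(\mathbf{C})$ preserves the orientation of $\mathbf{H}^3$, and it stabilizes $\widetilde{S}$, so it either preserves or reverses the orientation of $\widetilde{S}$. If $\delta$ preserves the orientation of $\widetilde{S}$, the implication (iii)$\Rightarrow$(i) of Lemma~\ref{lem:list} shows that $\delta$ is hyperbolic. Otherwise $\delta^2 \in \Gamma$ preserves the orientation of $\widetilde{S}$, remains loxodromic with axis $A_\gamma$, and the same lemma shows $\delta^2$ is hyperbolic. In either case, $A_\gamma$ is the axis of a hyperbolic element of $\Gamma$, so Lemma~\ref{lem:nosharedaxis} applied to $\gamma$ yields the desired contradiction.

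The key conceptual step is identifying the immersed totally geodesic surface $S$ with the quotient of $\widetilde{S}$ by its $\Gamma$-stabilizer, so that closed geodesics on $S$ arise from loxodromic elements of that stabilizer. The only technical wrinkle is the orientation issue on $\widetilde{S}$, which may require passing from $\delta$ to $\delta^2$ before applying Lemma~\ref{lem:list}; beyond that, the proof is essentially an assembly of the two preceding lemmas.
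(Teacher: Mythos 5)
Your proof is correct and follows essentially the same route as the paper's: lift $S$ to a totally geodesic hyperplane $\widetilde{S}$ containing $A_\gamma$, use the finite-area hypothesis to see that $\mathrm{Stab}_\Gamma(\widetilde{S})$ is a Fuchsian lattice containing an element translating along $A_\gamma$, conclude that this element (or its square) is hyperbolic, and finish with Lemma~\ref{lem:nosharedaxis}. Your explicit treatment of the orientation-reversing case via Lemma~\ref{lem:list} merely spells out a detail the paper leaves implicit when it asserts that the stabilizer is an arithmetic Fuchsian subgroup containing a hyperbolic element with axis $A_\gamma$.
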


\begin{proof}
We prove the contrapositive.
Let $c_\gamma$ be the closed geodesic associated to $\gamma$ and suppose that $c_\gamma$ lies in a finite area, totally geodesic surface $S\subset \mathbf{H}^3/\Gamma$.
Then the axis $A_\gamma$ lies in a totally geodesic hyperplane $\widetilde{S}\subset \mathbf{H}^3$ that covers $S$.
Since $S$ is finite area and hyperbolic, $\Lambda:=\mathrm{Stab}_{\Gamma}(\widetilde{S})$ is an arithmetic Fuchsian subgroup of $\Gamma$, and hence there exists some hyperbolic element $\delta\in \Lambda$ with axis $A_\gamma$.
By Lemma \ref{lem:nosharedaxis}, the result then follows.
\end{proof}

We note that it is a result of Long and Reid \cite{LR} that the fundamental group of any finite volume orientable hyperbolic $3$-manifold has infinitely many loxodromic elements, no power of which is hyperbolic.

%---------------------------------------------------------------------------------
%%% Counting quaternion algebras over quadratic fields
%---------------------------------------------------------------------------------

\section{Counting quaternion algebras over quadratic fields}

Let $k$ be a number field with a unique complex place and $L_1,\dots, L_n$ be quadratic extensions of $k$ with images under complex conjugation $L_1', \dots, L_n'$. Suppose that $[L_1 \cdots L_n L_1' \cdots L_n': k] = 2^{2n}$. In this section we will consider the case in which $k$ is an imaginary quadratic field and will count the number of quaternion algebras $B$ over $k$ which admit embeddings of all of the fields $L_i$ and are also of the form $B_0\otimes_{\Q}k$ for infinitely many rational indefinite quaternion algebras $B_0$. While the latter condition may seem arbitrary, it ensures that the Kleinian groups arising from $B$ contain Fuchsian subgroups arising from $B_0$ and hence will allow us to construct arithmetic hyperbolic $3$-manifolds containing infinitely many totally geodesic surfaces \cite[Theorem 9.5.4]{MR}.

%\begin{theorem}\label{theorem:tgs}
%Let $k$ be an imaginary quadratic field, $B$ be a quaternion algebra over $k$ and $B_0$ be an indefinite quaternion algebra defined over $\Q$; that is, $B_0\otimes_{\Q} \textbf{R}\cong \textrm{M}_2(\textbf{R})$. Then $B \cong B_0 \otimes_{\Q} k$ if and only if $\Ram_f(B)$ consists of $2r$ places (possibly zero) $\set{\mathfrak{p}_{i,j}}$ where $j$ ranges over $\set{1, \dots, r}$ and $i$ ranges over $\set{1,2}$ and satisfy
%\[ \mathfrak{p}_{1,j}\cap \textbf{Z} = \mathfrak{p}_{2,i} \cap \textbf{Z} = p_i, \]
%$\set{p_1,\dots,p_r} \subset \Ram_f(B_0)$ with $\Ram_f(B_0)\setminus \set{p_1,\dots ,p_r}$ consisting of rational primes which are inert or ramified in $k/\Q$.
%\end{theorem}

\begin{theorem}\label{theorem:tgs}
Let $k$ be a number field with a unique complex place and suppose that the maximal totally real subfield $k^+$ of $k$ satisfies $[k:k^+]=2$. Suppose $B^+$ is a quaternion algebra over $k^+$ ramified at all real places of $k^+$ except at the place under the complex place of $k$. Then $B \cong B^+ \otimes_{k^+} k$ if and only if $\Ram_f(B)$ consists of $2r$ places $\set{\mathfrak{P}_{i,j}}_{1\le i\le r,\,1\le j\le 2}$ satisfying
\[ \mathfrak{P}_{1,j}\cap \mathcal{O}_{k^+} = \mathfrak{P}_{2,i} \cap \mathcal{O}_{k^+} = \pp_i, \]
$\set{\pp_1,\dots,\pp_r} \subset \Ram_f(B^+)$ with $\Ram_f(B^+)\setminus \set{\pp_1,\dots ,\pp_r}$ consisting of primes in $\mathcal{O}_{k^+}$ which are inert or ramified in $k/k^+$.
\end{theorem}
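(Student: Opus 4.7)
The plan is to apply the Albert--Brauer--Hasse--Noether classification of quaternion algebras: two quaternion algebras over a number field are isomorphic if and only if they ramify at the same set of places. So the theorem reduces to computing $\Ram(B^+ \otimes_{k^+} k)$ and showing that the local ramification data matches the conditions in the statement on the nose.

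The key local identification is $(B^+ \otimes_{k^+} k)_{\mathfrak{P}} \cong B^+_{\mathfrak{p}} \otimes_{k^+_{\mathfrak{p}}} k_{\mathfrak{P}}$, where $\mathfrak{P}$ is a place of $k$ lying over a place $\mathfrak{p}$ of $k^+$. First, I would dispose of the archimedean contribution: because $k$ has a unique complex place, the hypothesis that $k^+$ is the maximal totally real subfield of $k$ with $[k:k^+]=2$ forces every real place of $k^+$ other than the one lying under the complex place of $k$ to split into two real places of $k$. At the split ramified real places of $k^+$ we have $\mathbb{H}\otimes_{\R}\R=\mathbb{H}$, and at the single unramified real place we have $M_2(\R)\otimes_{\R}\C=M_2(\C)$. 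Thus $\Ram_\infty(B^+\otimes_{k^+}k)$ consists exactly of all real places of $k$, matching what $\Ram_\infty(B)$ must be since $B$ is a quaternion algebra over $k$ giving a Kleinian group (all real places of $k$ are forced to ramify).

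Next, I would do the finite-place computation using one standard local fact: a quaternion division algebra over a local field $F$ is split by every quadratic extension of $F$ (since any such extension embeds into the unique quaternion division algebra over $F$). For a prime $\mathfrak{p}$ of $k^+$ with $\mathfrak{P}\mid\mathfrak{p}$, three cases arise. If $\mathfrak{p}\notin\Ram_f(B^+)$, then $B^+_{\mathfrak{p}}$ is already split and so is $(B^+\otimes k)_{\mathfrak{P}}$. If $\mathfrak{p}\in\Ram_f(B^+)$ and $\mathfrak{p}$ is inert or ramified in $k/k^+$, then $k_{\mathfrak{P}}/k^+_{\mathfrak{p}}$ is quadratic and by the local fact $(B^+\otimes k)_{\mathfrak{P}}\cong M_2(k_{\mathfrak{P}})$ is unramified. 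If $\mathfrak{p}\in\Ram_f(B^+)$ splits as $\mathfrak{P}_1\mathfrak{P}_2$ in $k$, then $k_{\mathfrak{P}_j}=k^+_{\mathfrak{p}}$ and $(B^+\otimes k)_{\mathfrak{P}_j}\cong B^+_{\mathfrak{p}}$ remains a division algebra, so both $\mathfrak{P}_1$ and $\mathfrak{P}_2$ ramify in $B^+\otimes_{k^+}k$.

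This shows $\Ram_f(B^+\otimes_{k^+}k)$ is precisely the set of primes of $k$ lying above primes of $\Ram_f(B^+)$ that split in $k/k^+$, occurring in pairs $\{\mathfrak{P}_{i,1},\mathfrak{P}_{i,2}\}$ above each split $\mathfrak{p}_i$. Both directions of the theorem now follow immediately: if $B\cong B^+\otimes_{k^+}k$, then $\Ram_f(B)$ has the described pairwise structure and the primes of $k^+$ below them form a subset of $\Ram_f(B^+)$ whose complement in $\Ram_f(B^+)$ consists of primes inert or ramified in $k/k^+$; conversely, if $\Ram_f(B)$ and $\Ram_f(B^+)$ satisfy the stated conditions, then by the above computation $\Ram(B^+\otimes_{k^+}k)=\Ram(B)$, and Albert--Brauer--Hasse--Noether gives $B\cong B^+\otimes_{k^+}k$. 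No step presents a genuine obstacle; the only care needed is tracking which real places of $k^+$ become complex in $k$ (forced by the hypothesis on $k$) and invoking the local splitting fact for quadratic extensions.
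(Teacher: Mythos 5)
Your proof is correct. The paper states Theorem \ref{theorem:tgs} without giving a proof, and your argument is the standard one: classify quaternion algebras by their ramification sets (Albert--Brauer--Hasse--Noether) and compute $\Ram(B^+\otimes_{k^+}k)$ locally, using that every quadratic extension of a local field splits the unique quaternion division algebra, so that ramification survives base change exactly at the two primes of $k$ above each prime of $\Ram_f(B^+)$ that splits in $k/k^+$, while the archimedean count forces all real places of $k^+$ except the one under the complex place to split into real places of $k$. Your remark that $B$ is implicitly assumed ramified at all real places of $k$ (as it is throughout the paper, since $B$ gives rise to an arithmetic Kleinian group) is the correct reading needed for the ``if'' direction.
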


When the above conditions on $\Ram(B)$ are satisfied, there are infinitely many algebras $B^+/k^+$ such that $B \cong B^+ \otimes_{k^+} k$. In particular, any arithmetic hyperbolic 3--orbifold constructed from $B$ will contain infinitely many primitive, totally geodesic, incommensurable surfaces.

\begin{theorem}\label{theorem:quaternionalgebraswithspecifieddiscriminantshape}
 Let $k$ be an imaginary quadratic field. Let $L_1, \dots, L_n$ be quadratic extensions of $k$, and let $L_1', \dots, L_n'$ be their images under complex conjugation. Suppose that $[L_1 \cdots L_n L_1' \cdots L_n': \Q] = 2^{2n}$. The number of quaternion algebras $B$ over $k$ which admit embeddings of all of the $L_i$, have discriminants of the form $\disc(B) = p_1 \cdots p_r \mathcal O_k$ where the $p_i$ are rational primes split in $k$, and have $|\disc_f(B)| < x$, is asymptotically
\[ C(k,L_1,\dots,L_n)  x^{1/2}/(\log{x})^{1-\frac{1}{2^{2n+1}}},\]
as $x\to\infty$. Here $C(k,L_1, \dots, L_n)$ is a positive constant.
\end{theorem}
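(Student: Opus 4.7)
The plan is to parametrize the counted algebras by their ramification sets, reduce the problem to counting squarefree integers with prime factors in a ``good'' set $P$ of rational primes, compute the density of $P$ by Chebotarev, and conclude via a Selberg--Delange-type asymptotic. Because $k$ is imaginary quadratic, $\Ram(B) = \Ram_f(B)$, which must have even cardinality. The prescribed shape of $\disc(B)$ forces
\[
\Ram_f(B) = \bigcupdot_{i=1}^{r} \{\mathfrak{p}_i, \bar{\mathfrak{p}}_i\}
\]
with $\mathfrak{p}_i\bar{\mathfrak{p}}_i = p_i\mathcal O_k$ and the $p_i$ distinct rational primes split in $k$. Then $|\disc_f(B)| = (p_1\cdots p_r)^2$, so $|\disc_f(B)|<x$ is equivalent to $m := p_1\cdots p_r < x^{1/2}$. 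Since a quaternion algebra over $k$ is determined by its ramification set, the problem becomes counting squarefree $m<x^{1/2}$ whose prime factors all lie in $P$.

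\textbf{Chebotarev density of $P$.} By the embedding theorem for quaternion algebras, $L_i \hookrightarrow B$ iff no prime of $\Ram(B)$ splits in $L_i/k$. Since $\Ram_f(B)$ is stable under complex conjugation and $\bar{\mathfrak{p}}$ is nonsplit in $L_i/k$ iff $\mathfrak{p}$ is nonsplit in $L_i'/k$, the joint embedding condition combines into: every prime $\mathfrak{p}\in \Ram_f(B)$ is nonsplit in each of the $2n$ quadratic extensions $L_1, L_1', \ldots, L_n, L_n'$. Let $F := kL_1\cdots L_nL_1'\cdots L_n'$, a Galois extension of $\mathbf Q$; set $G := \Gal(F/\mathbf Q)$ and $H := \Gal(F/k)$. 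The hypothesis on the compositum degree makes the restriction map
\[
\Phi: H \longrightarrow \prod_{i=1}^n \Gal(L_i/k) \times \prod_{i=1}^n \Gal(L_i'/k) \cong (\mathbf Z/2)^{2n}
\]
an isomorphism, so there is a unique $\sigma_\ast \in H$ with $\Phi(\sigma_\ast) = (-1,\ldots,-1)$. Complex conjugation acts on $H$ by swapping $\chi_i \leftrightarrow \chi_i'$, hence fixes $\sigma_\ast$, making the $G$-conjugacy class of $\sigma_\ast$ the singleton $\{\sigma_\ast\}$. Therefore $p\in P$ iff $\Frob_p = \sigma_\ast$, and by Chebotarev $P$ has density $\delta := 1/|G| = 1/2^{2n+1}$.

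\textbf{Selberg--Delange asymptotic and main obstacle.} Let $f(m)$ be the indicator that $m$ is squarefree with every prime factor in $P$. Fourier-expanding $\mathbf{1}_{\{\Frob_p = \sigma_\ast\}}$ in the irreducible characters of $G$ gives a factorization
\[
\sum_m \frac{f(m)}{m^s} = \prod_{p\in P}(1+p^{-s}) = \zeta(s)^{\delta}\, \Psi(s),
\]
where $\Psi(s)$ is holomorphic and nonvanishing in a standard zero-free region of $\zeta$ about $s=1$: the trivial character supplies the $\zeta(s)^\delta$ factor, and each nontrivial character $\chi$ of $G$ contributes a factor involving an Artin $L$-function $L(s,\chi)$ that is holomorphic and nonvanishing at $s=1$ (in fact entire, because $G$ is a $2$-group, hence monomial, so each such $L$-function is a Hecke $L$-function). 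The Selberg--Delange theorem (see, e.g., Tenenbaum, \emph{Introduction to Analytic and Probabilistic Number Theory}) then yields
\[
\sum_{m\le y} f(m) \sim \frac{\Psi(1)}{\Gamma(\delta)} \cdot \frac{y}{(\log y)^{1-\delta}} \qquad (y\to\infty),
\]
and substituting $y = x^{1/2}$ gives the stated asymptotic with a positive constant $C(k,L_1,\ldots,L_n)$. The main technical work lies in this analytic step --- rigorously producing $\Psi(s)$ with the required analytic continuation and polynomial-type growth and then verifying the hypotheses of Selberg--Delange in this Artin $L$-function setting --- while the Chebotarev density computation and the algebraic parametrization are immediate consequences of the embedding theorem and the linear-disjointness hypothesis.
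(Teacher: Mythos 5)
Your proposal is correct and reaches the theorem by the same overall strategy as the paper --- reduce to counting squarefree integers supported on a set of rational primes of density $2^{-(2n+1)}$ --- but both key sub-steps are executed differently. For the density, the paper works over $k$: it applies a quantitative Chebotarev theorem (Schulze) to $L/k$ to count the primes of $k$ nonsplit in every $L_i$ and $L_i'$ (density $2^{-2n}$, error $O(X/(\log X)^2)$), discards the finitely relevant higher-degree primes, and uses that the norm gives a $2$-to-$1$ map onto the set $\Pp$ of rational primes in question; you instead note that the element $\sigma_{\ast}\in\Gal(L/\Q)$ nontrivial on every $L_i$ and $L_i'$ is central (complex conjugation permutes the coordinates and fixes it), so Chebotarev over $\Q$ applied to the singleton class gives the density $1/2^{2n+1}$ directly --- the same conjugation symmetry the paper exploits, packaged more cleanly. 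For the analytic endgame, the paper quotes an elementary mean value theorem for multiplicative functions (Spearman--Williams, Proposition 5.3 and its corollary), which needs only the $O(X/(\log X)^2)$ error in the prime count; you invoke Selberg--Delange via $\prod_{p\in P}(1+p^{-s})=\zeta(s)^{\delta}\Psi(s)$, which is heavier machinery and, as you acknowledge, requires justifying the continuation of $\Psi$ through Artin $L$-functions (your monomiality remark for the $2$-group does this); since some of the resulting Hecke $L$-functions are attached to real characters, you should also note that possible real zeros near $s=1$ are absorbed by shrinking the Selberg--Delange region, which is legitimate because $k$ and the $L_i$ are fixed. Both routes give the exponent $1-\frac{1}{2^{2n+1}}$, and your substitution $y=x^{1/2}$ (the norm of $p_1\cdots p_r\mathcal{O}_k$ being the square) matches the paper's final step. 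One small observation: like the paper's own proof, your argument uses that the compositum has degree $2^{2n}$ over $k$ (so that restriction to the $2n$ quadratic extensions is an isomorphism on $\Gal(L/k)$), whereas the theorem as stated writes the degree over $\Q$; you have silently read the hypothesis the same way the paper's proof does.
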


We need a lemma on mean values of multiplicative functions. The next result appears in more precise form in work of Spearman and Williams \cite[Proposition 5.5]{SW}.

\begin{prop}\label{prop:odoni} Let $f$ be a multiplicative function satisfying $0 \le f(n) \le 1$ for all positive integers $n$. Suppose that there are positive constants $\tau$ and $\beta$ with
\[ \sum_{p \le x} f(p) = \tau \frac{x}{\log{x}} + O\left(\frac{x}{(\log{x})^{1+\beta}}\right). \]
As $x\to\infty$,
\[ \sum_{n \le x} f(n) \sim C_f x (\log{x})^{\tau-1} \]
for a certain positive constant $C_f$.
\end{prop}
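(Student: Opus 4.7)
The plan is to recognize Proposition \ref{prop:odoni} as a direct application of the classical mean value theorem for non-negative multiplicative functions (Wirsing, with later refinements by Delange, Halász, Selberg, and others). Because $0 \le f(n) \le 1$, all standard growth and moment conditions on the Euler factors of $f$ are automatic; the substantive content of the hypothesis is the prime sum asymptotic, which is exactly the input such Tauberian theorems require.

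The first step is to rephrase the hypothesis in the form used by those theorems. By Abel summation,
\[ \sum_{p \le x} f(p) \log p = \tau x + O\!\left(\frac{x}{(\log x)^\beta}\right), \]
which in particular gives $\sum_{p \le x} f(p) \log p \sim \tau x$ as $x \to \infty$.

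Next, I would set up the Dirichlet series $F(s) = \sum_{n \ge 1} f(n) n^{-s}$, which has the Euler product $F(s) = \prod_p \bigl(1 + \sum_{k \ge 1} f(p^k) p^{-ks}\bigr)$ for $\text{Re}(s) > 1$, and factor $F(s) = \zeta(s)^\tau G(s)$. Expanding $\log F(s) - \tau \log \zeta(s)$ via $\log(1+u) = u + O(u^2)$ on each Euler factor leaves
\[ \log F(s) - \tau \log \zeta(s) = \sum_p \frac{f(p) - \tau}{p^s} + O(1), \]
and the prime sum estimate above, combined with partial summation, shows this series converges on $\text{Re}(s) \ge 1$. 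Consequently $G(s)$ extends continuously to a neighborhood of the line $\text{Re}(s) = 1$, with positive limit
\[ G(1) = \prod_p \left(1 - \frac{1}{p}\right)^{\tau}\!\left(1 + \sum_{k \ge 1} \frac{f(p^k)}{p^k}\right) > 0. \]

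Finally, I would invoke a Wirsing/Delange-type Tauberian theorem (for instance, the version in Tenenbaum's \emph{Introduction to Analytic and Probabilistic Number Theory}, Chapter II.5, or the more precise form of Spearman--Williams referenced immediately after the statement) to conclude
\[ \sum_{n \le x} f(n) \sim C_f \, x (\log x)^{\tau-1}, \qquad C_f = \frac{G(1)}{\Gamma(\tau)} > 0. \]
The main obstacle in a from-scratch argument would be the Tauberian step itself: since the hypothesis offers only a $(\log x)^{-\beta}$ saving in the prime sum, one has no power-of-$x$ saving and cannot run a quantitative Perron integration. A Tauberian theorem of Wirsing type is the natural tool, and all of its hypotheses are immediate from $0 \le f \le 1$ together with the prime sum estimate already in hand.
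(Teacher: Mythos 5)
Your proposal is correct and is essentially the paper's approach: the paper gives no independent argument but simply cites the more precise mean-value theorem of Spearman and Williams \cite[Proposition 5.5]{SW}, and your proof likewise reduces to verifying that the hypothesis $0\le f\le 1$ plus the prime-sum asymptotic are exactly the input of such a Wirsing/Delange/Spearman--Williams type theorem and then quoting it. The Dirichlet-series scaffolding ($F(s)=\zeta(s)^\tau G(s)$, positivity of $G(1)$) is not needed once you invoke that result, and on its own mere continuity of $G$ on $\mathrm{Re}(s)=1$ would not suffice for a from-scratch Tauberian argument, but since you defer the Tauberian step to the cited theorem this causes no gap.
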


This has the following immediate consequence.

\begin{cor}\label{cor:odoni} Let $\Pp$ be a set of primes. Suppose that there are positive constants $\tau$ and $\beta$ with
\[ \sum_{\substack{p \le x \\ p \in \Pp}} 1 = \tau \frac{x}{\log{x}} + O\left(\frac{x}{(\log{x})^{1+\beta}}\right).  \]
As $x\to\infty$, the number of squarefree integers $d\le x$ composed entirely of primes from $\Pp$ is asymptotic to $C_{\Pp} x (\log{x})^{\tau-1}$ for a certain positive constant $C_{\Pp}$.
\end{cor}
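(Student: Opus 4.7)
The plan is to apply Proposition \ref{prop:odoni} directly by choosing the right multiplicative function. Specifically, I would let $f\colon \mathbf{Z}_{>0}\to \{0,1\}$ be the indicator function of those squarefree integers all of whose prime factors belong to $\Pp$. Then $f$ is multiplicative, $0\le f(n)\le 1$ for every $n$, and its summatory function is precisely the counting function the corollary is concerned with, so it suffices to check the prime-sum hypothesis of Proposition \ref{prop:odoni}.

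Evaluating $f$ at prime powers gives $f(p)=\mathbf{1}[p\in\Pp]$ and $f(p^k)=0$ for $k\ge 2$; in particular,
\[ \sum_{p\le x} f(p) \;=\; \sum_{\substack{p\le x\\ p\in \Pp}} 1 \;=\; \tau\,\frac{x}{\log x}+O\!\left(\frac{x}{(\log x)^{1+\beta}}\right), \]
which is exactly the hypothesis of Proposition \ref{prop:odoni} with the same constants $\tau$ and $\beta$. Invoking that proposition yields
\[ \sum_{n\le x} f(n) \;\sim\; C_f\, x\,(\log x)^{\tau-1} \]
as $x\to\infty$ for some positive constant $C_f$, and setting $C_\Pp:=C_f$ gives the claim.

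No serious obstacle arises; the only point that deserves a brief remark is the verification that $f$ is multiplicative (which is immediate from the definition of squarefreeness together with the assumption that membership in $\Pp$ is a condition on each prime individually) and that the constant produced by Proposition \ref{prop:odoni} is positive (which is asserted as part of the conclusion of that proposition, and in any event is clear from its proof in \cite{SW} since $f(p)=1$ on a set of primes of positive relative density $\tau$). Thus the corollary follows as stated.
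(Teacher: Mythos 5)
Your proof is correct and matches the paper's argument exactly: the paper likewise applies Proposition \ref{prop:odoni} to the characteristic function of squarefree integers composed of primes from $\Pp$, and you have simply spelled out the (routine) verification that this function is multiplicative, bounded by $1$, and satisfies the prime-sum hypothesis with the same $\tau$ and $\beta$.
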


\begin{proof} Apply Proposition \ref{prop:odoni} to the characteristic function of these $d$.
\end{proof}

\begin{proof}[Proof of Theorem \ref{theorem:quaternionalgebraswithspecifieddiscriminantshape}] If the $p_i$ are distinct primes split in $k$ and $\disc(B) = p_1 \cdots p_r \mathcal O_k$, then $B$ admits embeddings of all of the $L_i$ if and only if each $p_i \in \Pp$, where
\[ \Pp=\{p: p\text{ splits in $k$, every $\pp \mid p$ is nonsplit in every $L_i$}\}. \]
Thus, we can count the number of possibilities for $B$ by counting the number of squarefree $d$ composed entirely of primes from $\Pp$.

Observe that if $p$ splits in $k$ as $\pp \pp'$, then $\pp$ splits in one of the fields $L_i$ if and only if $\pp'$ splits in the corresponding $L_i'$. Thus, $p$ belongs to $\Pp$ precisely when $p$ splits as a product of two primes neither of which split in any of the $L_i$ or $L_i'$. Let
\[ \Qq= \{\pp: \pp \text{ prime of $k$ not split in any $L_i$ or $L_i'$}\}. \]
Let $L$  be the compositum of the $L_i$ and $L_i'$. Since $[L:k] = 2^{2n}$, the Galois group of $L/k$ is canonically isomorphic to the direct sum of the groups $\Gal(L_i/k)$ and $\Gal(L_i'/k)$. The Chebotarev density theorem now implies that the set $\Qq$ has density $2^{-2n}$, and the quantitative form of the theorem appearing in \cite{Schulze} shows that as $X\to\infty$,
\begin{equation}\label{eq:qqcount} \#\{\pp \in \Qq: \Norm_{k/\Q}(\pp) \le X\} = \frac{1}{2^{2n}} \frac{X}{\log{X}} + O\left(\frac{X}{(\log{X})^2}\right). \end{equation}
(We  allow the implied constant to depend on $k$ and the $L_i$.)  Let
\[ \Qq'= \{ \pp \in \Qq: \pp \text{ absolute degree $1$}, \pp \nmid \Delta_k \}. \]
The number of elements of $\Qq \setminus \Qq'$ with norm not exceeding $X$ is $O(X^{1/2})$. Thus, the estimate \eqref{eq:qqcount} continues to hold if $\Qq$ is replaced by $\Qq'$. The norm from $K$ down to $\Q$ induces a $2$-to-$1$ map from $\Qq'$ onto $\Pp$. Thus, as $X\to\infty$,
\[ \sum_{\substack{p \le X \\ p \in \Pp}} 1= \frac{1}{2^{2n+1}} \frac{X}{\log{X}} + O\left(\frac{X}{(\log{X})^2}\right). \]

By Corollary \ref{cor:odoni}, the number of squarefree $d \le X$ composed of primes from $\Pp$ is asymptotically $C X/(\log{X})^{1-\frac{1}{2^{2n+1}}}$, for a certain constant $C$. Since $|d\mathcal O_k| = d^2$, we obtain the theorem upon taking $X = x^{1/2}$.
\end{proof}

%---------------------------------------------------------------------------------
%%% Theorem COMM
%---------------------------------------------------------------------------------
\section{Proof of Theorem \ref{theorem:comm}}

Let $\Gamma$ denote the fundamental group of $M$. As in Reid's proof \cite{R} that the commensurability class of an arithmetic hyperbolic surface is determined by its length spectrum, it suffices to show that the invariant trace field $k$ and invariant quaternion algebra $B$ from which $\Gamma$ arises are determined by $\mathcal{L}^{TG}(M)$ and any geodesic length $\ell(\gamma)$ associated to an element $\gamma\in\Gamma$ which is loxodromic but not hyperbolic.

Let $\gamma\in\Gamma$ be as above and $\Gamma^{(2)}$ be the subgroup of $\Gamma$ generated by squares. We will show that $k$ is determined by $\ell(\gamma^2)=2\ell(\gamma)$. It follows from the formula \[ \cosh(\ell(\gamma^2)/2)=\pm\tr(\gamma^2)/2\] that $\ell(\gamma)$ determines $\tr(\gamma^2)$ (up to a sign). As $\Gamma^{(2)}$ is derived from $B$ in the sense that there is a maximal order $\mathcal O$ of $B$ such that $\Gamma\subset \Gamma_{\mathcal O}^1$ (see \cite[Chapter 3]{MR}), Lemma 2.3 of \cite{chinburg-geodesics} shows that $k=\Q(\tr(\gamma^2))$.

It remains to show that $\mathcal{L}^{TG}(M)$ determines the isomorphism class over $k$ of the quaternion algebra $B$. Let $\ell(\gamma')\in\mathcal{L}^{TG}(M)$. The argument above shows that $\ell(\gamma')$ determines $\tr(\gamma'^2)$ (up to a sign), and another application of Lemma 2.3 of \cite{chinburg-geodesics} allows us to deduce that the maximal totally real subfield $k^+$ of $k$ satisfies $k^+=\Q(\tr(\gamma'^2))$ and $[k:k^+]=2$.

Recall now that Theorem \ref{theorem:tgs} shows that there are primes $\mathfrak{p}_1,\dots,\mathfrak{p}_r$ of $k^+$, all of which split in $k/k^+$, such that $\disc_f(B)=\mathfrak{p}_1\cdots\mathfrak{p}_r\mathcal{O}_k$. The algebra $B$ is ramified at all real primes of $k$, hence it suffices to show that $\mathcal{L}^{TG}(M)$ determines $\mathfrak{p}_1,\dots,\mathfrak{p}_r$.

Consider the set \[ S=\{ k^+(\lambda(\gamma'^2)) : \ell(\gamma')\in\mathcal{L}^{TG}(M) \}\] of quadratic extensions of $k^+$. This is precisely the set of maximal subfields of quaternion algebras $B^+$ over $k^+$ such that $B^+\otimes_{k^+} k\cong B$. Given a field $L\in S$, denote by $\mathrm{Spl}(L/k^+)$ the set of primes in $k^+$ which split in the extension $L/k^+$. We claim that \[ \bigcap_{L\in S} \left[ \Pp_{k^+}\setminus\mathrm{Spl}(L/k^+)\right] = \{\mathfrak{p}_1,\dots, \mathfrak{p}_r \}.\] We begin by observing that none of the primes $\mathfrak{p}_i$ split in any of the extensions $L/k^+$ where $L\in S$. Indeed, this follows from Theorem \ref{theorem:tgs} along with the Albert-Brauer-Hasse-Noether theorem, which in this context states that the field $L$ embeds into $B^+$ if and only if no prime of $k^+$ which ramifies in $B^+$ splits in $L/k^+$. Suppose now that $\mathfrak{p}\in \Pp_{k^+}\setminus \{\mathfrak{p}_1,\dots,\mathfrak{p}_r\}$. We will show that there exists a field $L\in S$ such that $\mathfrak{p}\in \mathrm{Spl}(L/k^+)$. Let $\mathfrak{p}'$ be a finite prime of $k^+$ which is inert in $k/k^+$ and $B^+$ be the quaternion algebra over $k^+$ which is ramified at all real places of $k^+$ except for the one lying below the complex place of $k$ along with $\{\mathfrak{p}_1,\dots, \mathfrak{p}_r \}$ if $r+r_1(k^+)-1$ is even and $\{\mathfrak{p}_1,\dots, \mathfrak{p}_r \}\cup \{\mathfrak{p}'\}$ if $r+r_1(k^+)-1$ is odd. Theorem \ref{theorem:tgs} shows that $B^+\otimes_{k^+}k\cong B$, hence every maximal subfield of $B^+$ lies in $S$. That there exists a maximal subfield of $B^+$ in which $\mathfrak{p}$ splits now follows from the Grunwald-Wang theorem \cite[Theorem 32.18]{Reiner}, which implies the existence of quadratic field extensions of $k^+$ satisfying prescribed local splitting behavior at any finite number of primes. This completes the proof of Theorem \ref{theorem:comm}.

%---------------------------------------------------------------------------------
%%% Theorem 1
%---------------------------------------------------------------------------------
\section{Proof of Theorem \ref{theorem:theorem1}}

\begin{prop}\label{proposition:constructionoffields}
Let $k$ be an imaginary quadratic field of discriminant $\Delta_k$, and let $n$ be a positive integer. Let $\epsilon > 0$. There exist quadratic extensions $L_1,\dots,L_n$ of $k$ such that
\begin{enumerate}
\item none of the quartic extensions $L_i/\Q$ are Galois,

\item the compositum of the fields $L_i$ and $L_i'$ has degree $2^{2n}$ over $k$, where $L_i'$ is the image in $\C$ of $L_i$ under complex conjugation, and

\item the absolute value of the discriminants of the $L_i$ all satisfy $\abs{\Delta_{L_i}}\leq c(k,\epsilon) n^{8+\epsilon}$, where $c(k,\epsilon)$ is a constant depending on $k$ and $\epsilon$.
\end{enumerate}	
\end{prop}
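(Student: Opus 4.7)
The plan is to construct the $L_i$ as quadratic extensions $L_i = k(\sqrt{\pi_i})$, where $\pi_1,\dots,\pi_n$ are primes of $\mathcal{O}_k$ lying above $n$ distinct rational primes $p_1 < p_2 < \cdots < p_n$ that split in $k$ (so $p_i\mathcal{O}_k = \pi_i\bar\pi_i$ with $\pi_i\neq\bar\pi_i$). Since the set of rational primes splitting in $k$ has density $\tfrac{1}{2}$, the quantitative prime number theorem applied to the nontrivial Dirichlet character attached to $k/\Q$ produces such primes with $p_n \le C(k)\,n\log n$, which is far stronger than what $(3)$ demands.

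To verify $(1)$, observe that $L_i/\Q$ is Galois precisely when $L_i = L_i'$, and this happens iff $\bar\pi_i/\pi_i = p_i/\pi_i^2$ is a square in $k$. Since $k$ is imaginary quadratic, any $\alpha\in k$ with $\alpha^2 = p_i$ must be real and positive, hence $\alpha\in\Q$, contradicting the primality of $p_i$. Thus no $L_i/\Q$ is Galois. For $(2)$, I would check that the classes of $\pi_1,\bar\pi_1,\dots,\pi_n,\bar\pi_n$ are $\mathbf{F}_2$-linearly independent in $k^*/(k^*)^2$: a multiplicative relation $\prod_i \pi_i^{a_i}\bar\pi_i^{b_i}\in(k^*)^2$ with $a_i,b_i\in\{0,1\}$ yields, upon reading off the $\pi_j$-adic valuation, that $a_j$ must be even and hence zero, and similarly for each $b_j$. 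Kummer theory then gives $[k(\sqrt{\pi_1},\sqrt{\bar\pi_1},\dots,\sqrt{\pi_n},\sqrt{\bar\pi_n}):k] = 2^{2n}$, which is property $(2)$.

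For $(3)$, I would use the standard fact that the relative discriminant $\Delta_{L_i/k}$ divides $(4\pi_i)$ in $\mathcal{O}_k$, so $N_{k/\Q}(\Delta_{L_i/k}) \le 16\, p_i$. The discriminant tower formula then yields
\[
|\Delta_{L_i}| = |\Delta_k|^2 \cdot N_{k/\Q}(\Delta_{L_i/k}) \le 16\,|\Delta_k|^2\, p_i \le C'(k)\, n\log n,
\]
which sits comfortably inside the bound $c(k,\epsilon)\,n^{8+\epsilon}$ demanded by the proposition.

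The sole non-formal ingredient is the analytic estimate controlling the size of the $n$-th split prime, and this is the step I expect to be the main ``obstacle'' only in a bookkeeping sense, since classical effective PNT in arithmetic progressions (for a fixed field $k$) easily yields the polynomial-in-$n$ bound needed. The generous exponent $8+\epsilon$ in $(3)$ leaves enormous slack, so there is no need to optimize this analytic input.
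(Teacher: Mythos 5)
Your construction is essentially sound and in fact simpler than the paper's, but as written it has one genuine gap: you treat the primes of $\mathcal{O}_k$ above the split rational primes $p_i$ as \emph{elements} $\pi_i$, writing $L_i=k(\sqrt{\pi_i})$ and $p_i\mathcal{O}_k=\pi_i\bar{\pi}_i$. When the class number $h_k>1$ (e.g.\ $k=\Q(\sqrt{-5})$), the prime ideals above a split rational prime need not be principal, so no such element $\pi_i$ exists and $k(\sqrt{\pi_i})$ is not defined; the valuation argument for independence in $k^*/(k^*)^2$ and the divisibility $\Delta_{L_i/k}\mid(4\pi_i)$ are also stated at the level of elements and inherit this problem. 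The repair is easy: restrict to rational primes $p_i$ that split completely in the Hilbert class field of $k$, so that the primes $\mathfrak{p}_i$ of $k$ above them are principal, say $\mathfrak{p}_i=(\pi_i)$. This set of primes has density $1/(2h_k)>0$, so the $n$-th one is still $O_k(n\log(2n))$ and all subsequent estimates survive with constants depending only on $k$, as the proposition allows. Note also that once $(\pi_i)=\mathfrak{p}_i$, positivity of norms in an imaginary quadratic field gives $\pi_i\bar{\pi}_i=p_i$ exactly, so your identity $\bar{\pi}_i/\pi_i=p_i/\pi_i^2$ becomes legitimate; alternatively, (i) already follows from (ii), or simply from the fact that $v_{\mathfrak{p}_i}(\bar{\pi}_i/\pi_i)=-1$ is odd.

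With that repair the rest of your argument is correct: the independence of $\pi_1,\bar{\pi}_1,\dots,\pi_n,\bar{\pi}_n$ modulo squares via $\mathfrak{p}_j$-adic valuations plus Kummer theory gives (ii), and $|\Delta_{L_i}|=|\Delta_k|^{2}\,\Norm_{k/\Q}(\Delta_{L_i/k})\le 16|\Delta_k|^{2}p_i$ gives (iii) with room to spare. This is a genuinely different route from the paper, which works with the elements $x_i+\sqrt{\Delta_k}$ (thereby sidestepping any principality issue) and invokes the linear sieve to choose $x_i$ so that the auxiliary split prime $\mathfrak{p}_i=(p_i,x_i+\sqrt{\Delta_k})$ ramifies in $L_i$ but in none of the other $2n-1$ fields; that yields $|\Delta_{L_i}|\ll_{k,\epsilon} n^{8+\epsilon}$, whereas your construction, once fixed, dispenses with the sieve entirely and gives the much stronger bound $|\Delta_{L_i}|\ll_k n\log(2n)$.
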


\begin{proof}[Proof of Proposition \ref{proposition:constructionoffields}] %Let $p_1, p_2, \dots, p_n$ denote the first $n$ primes from the residue class $1\bmod{|\Delta_k|}$. With $L$ the constant appearing in Proposition \ref{prop:linnik},
%\[ p_n \le |\Delta_k|^{L} \cdot n\log{(2n)}. \] Clearly each $p_i$ is odd, since $|\Delta_k| > 1$. Moreover, $\leg{\Delta_k}{p_i}=1$ for all $i=1,2, 3, \dots, n$. Thus, $\Delta_k$ is a square modulo each $p_i$, and by Hensel's lemma, $\Delta_k + p_i$ is a square modulo $p_i^2$. For each $i=1, 2, \dots, n$, we will choose $x_i$ so that
%\begin{equation}\label{eq:xiconditions} x_i^2 \equiv \Delta_k+p_i \pmod{p_i^2}, \quad\text{while for every $1\le j \ne i \le n$}, \quad x_i^2\not\equiv \Delta_k \pmod{p_j}. \end{equation}
%To this end, let $r_i$ be the smallest nonnegative integer with $r_i^2 \equiv \Delta_k + p_i \pmod{p_i^2}$, so that $0 \le r_i < p^2$. We will take $x_i = r_i + p_i^2 t_i$ for a nonnegative integer $t_i$. The second half of \eqref{eq:xiconditions} will hold as long as $t_i$ avoids a certain two residue classes modulo each $p_j$, with $j \ne i$. By the fundamental lemma of the sieve (see \cite[Theorem 2.5, p. 82]{HR} or \cite[Lemma 1.4, pp. 5-6]{kubilius}), we can choose $t_i \le p_n^{C_0}$ for a certain absolute positive constant $C_0$. Thus,
%\[ x_i = r_i + p_i^2 t_i \le p_i^2 + p_i^2 \cdot p_n^{C_0} \le p_n^{C_1} \]
%for another absolute constant $C_1 > 0$.

Let $p_1, p_2, \dots, p_n$ denote the first $n$ odd primes that split in $k$. The splitting condition amounts to restricting the $p_i$ to a certain half of the coprime residue classes modulo $\Delta_k$, and so the prime number theorem for progressions shows that
\[ p_n < c_1 n\log{(2n)},\]
where $c_1$ is a constant depending only on $k$.

Now $\Delta_k$ is a square modulo each $p_i$, and by Hensel's lemma, $\Delta_k + p_i$ is a square modulo $p_i^2$. For each $i=1, 2, \dots, n$, we will choose $x_i$ such that
\begin{equation}\label{eq:xiconditions} x_i^2 \equiv \Delta_k+p_i \pmod{p_i^2}, \quad\text{while for every $1\le j \ne i \le n$}, \quad x_i^2\not\equiv \Delta_k \pmod{p_j}. \end{equation}
To this end, let $r_i$ be the smallest nonnegative integer with $r_i^2 \equiv \Delta_k + p_i \pmod{p_i^2}$, so that $0 \le r_i < p_i^2$. We will take $x_i = r_i + p_i^2 t_i$ for a nonnegative integer $t_i$. To choose $t_i$ as small as possible, we apply a lower bound sieve method. The second half of \eqref{eq:xiconditions} will hold as long as $t_i$ avoids a certain two residue classes modulo each $p_j$, with $j \ne i$. Since the $p_j$ are restricted to the set of primes splitting in $k$ --- which form a set of density $\frac{1}{2}$ --- we have a sieve problem of dimension $2 \cdot \frac{1}{2} = 1$. The linear sieve (see, e.g., \cite[Theorem 7.1, p. 81]{DH}) implies that we may take each $t_i < c_2  p_n^{2+\epsilon}$, for a constant $c_2$ depending on $k$ and $\epsilon$. Thus,
\[ x_i = r_i + p_i^2 t_i \le p_i^2 + p_i^2 \cdot c_2 p_n^{2+\epsilon} \le c_3 p_n^{4+\epsilon} \]
for a certain $c_3 =c_3(k,\epsilon)$.

We now define
\[ L_i = k(\sqrt{x_i + \sqrt{\Delta_k}}), \]
so that the image of $L_i$ under complex conjugation is
\[ L_i' = k(\sqrt{x_i - \sqrt{\Delta_k}}). \]
By our choice of $x_i$, the norm from $k$ to $\Q$ of $x_i + \sqrt{\Delta_k}$ is divisible by $p_i$ but not $p_i^2$. Hence,
$x_i + \sqrt{\Delta_k}$ is not a square in $k$, and so $L_i$ is indeed a quadratic extension of $k$.

We now check that $(i)$, $(ii)$, and $(iii)$ hold.

In fact, $(i)$ is a consequence of $(ii)$, since $(ii)$ implies that $L_i$ and its conjugate field $L_i'$ are distinct. To prove $(ii)$, we appeal to elementary results about the splitting of primes in relative quadratic extensions. Let $\pp_i = (p_i, x_i + \sqrt{\Delta_k}) \subset \mathcal{O}_k$. Then $\pp_i$ is a prime of $k$ lying above the rational prime $p_i$, and $\Norm_{k/\Q}(\pp_i)= p_i$. Moreover, since $\pp_i \mid x_i + \sqrt{\Delta_k}$ and
$\pp_i^2 \nmid x_i + \sqrt{\Delta_k}$, the prime $\pp_i$ ramifies in $L_i$ (see, e.g., \cite[Theorem 118, pp. 134--135]{hecke}). We claim that $L_i$ is the only field among $L_1, \dots, L_n$, $L_1', \dots, L_n'$ in which $\pp_i$ ramifies. For this, it is enough to show (by the same theorem from \cite{hecke}) that for all $j \ne i$, $\pp_i \nmid x_j \pm \sqrt{\Delta_k}$ and that $\pp_i \nmid x_i - \sqrt{\Delta_k}$. The first half of this is clear, since $p_i = \Norm_{k/\Q}(\pp_i) \nmid x_j^2 - \Delta_k$. Turning to the second half, observe that if $\pp_i \mid x_i - \sqrt{\Delta_k}$, then
\[ \pp_i \mid (x_i+\sqrt{\Delta_k})-(x_i-\sqrt{\Delta_k}) = 2\sqrt{\Delta_k}, \]
contradicting that $\pp_i$ lies above an odd prime not dividing $\Delta_k$. We deduce that $L_i$ is not contained in the compositum of the $2n-1$ other fields. Applying complex conjugation, we see that the same holds for $L_i'$. Now  $(ii)$ follows immediately.

Turning to $(iii)$, notice that $L_i = \Q(\theta_i)$, where $\theta_i = \sqrt{x_i + \sqrt{\Delta_k}}$.
The minimal polynomial of $\theta_i$ over $\Q$ is $f_i(T) = (T^2-x_i)^2-\Delta_k = T^4 - 2x_i T^2 + (x_i^2-\Delta_k)$, and
\[ \disc(f_i(T))= 256 (x_i^2-\Delta_k) \Delta_k^2. \]
Since $\Delta_{L_i} \mid \disc(f_i(T))$,
\begin{align*} |\Delta_{L_i}| \le 256(|x_i|^2+|\Delta_k|) |\Delta_k|^2 &\le 256(c_3^2 p_n^{8+2\epsilon} + |\Delta_k|) |\Delta_k|^2 \\ &\le 256(c_3^2 (c_1 n\log(2n))^{8+2\epsilon} + |\Delta_k|) |\Delta_k|^2 \le c_4 n^{8+3\epsilon},
\end{align*}
for a certain $c_4 = c_4(k,\epsilon)$. Replacing $\epsilon$ with $\epsilon/3$, we obtain $(iii)$.
\end{proof}

We now prove Theorem \ref{theorem:theorem1}.

Fix an imaginary quadratic field $k$ of discriminant $\Delta_k$ and let $L_1,\dots, L_n$ be quadratic extensions of $k$ satisfying the conditions of Proposition \ref{proposition:constructionoffields}.

%Fix an imaginary quadratic field $k$ of discriminant $\Delta_k$ and let $p$ be an odd prime which splits in $k/\Q$. By \cite[Theorem 1]{PPP}, for every $\epsilon>0$ there exists a constant $c_1>0$ such that $p$ may be chosen so as to satisfy
%\begin{equation}
%p< c_1\abs{\Delta_k}^{\frac{1}{4}+\epsilon}.
%\end{equation}
%We remark that when $k$ has prime discriminant a similar bound may be deduced from an earlier paper of Linnik and Vinogradov \cite{VL}. Let $\frakq_1,\frakq_2$ be the primes of $k$ lying above $p$; that is, $p\mathcal O_k=\frakq_1\frakq_2$. Also, let $L_1,\dots, L_n$ be quadratic field extensions of $k$ such that
%\begin{enumerate}
%\item the prime $\frakq_1$ ramifies in $L_i/k$ for all $i$,
%\item the prime $\frakq_2$ splits in $L_i/k$ for all $i$, and
%\item the degree over $k$ of the compositum $L_1\cdots L_n$ is $2^n$.
%\end{enumerate}
%We claim that for each $i$ the quartic extension $L_i/\Q$ is not Galois. Indeed, let $\frakq_1',\frakq_2',\frakq_3'$ be the primes of $L_i$ such that $\frakq_1\mathcal O_{L_i}=(\frakq_1')^2$ and $\frakq_2\mathcal O_{L_i}=\frakq_2'\frakq_3'$. Our assertion that $L_i/\Q$ is not Galois now follows from the fact that both $\frakq_1'$ and $\frakq_2'$ lie above $p$ yet have different ramification degrees.

Let $B$ be a quaternion division algebra over $k$ into which all of the $L_i$ embed and whose discriminant is of the form $p_1\cdots p_r\mathcal O_k$ where $p_1,\dots, p_r$ are rational primes splitting in $k/\Q$. Finally, let $\mathcal O$ be a maximal order of $B$. We will construct, for each field $L_i$, a loxodromic element $\gamma_i\in \Gamma_{\mathcal{O}}^1$ such that $L_i=k(\lambda(\gamma_i))$ and whose associated geodesic has length which we can bound.

%Let $B'$ be a quaternion division algebra over $k$ into which all of the extensions $L_i/k$ embed, $\mathcal O$ be a maximal order of $B'$ and $\Gamma_{\mathcal O}^1$ be the image in $\PSL_2(\C)$ of the multiplicative group $\mathcal O^1$ of elements of $\mathcal O^*$ having reduced norm $1$. The group $\Gamma_{\mathcal O}^1$ is a cocompact Kleinian group whose covolume, by the work of Borel \cite{borel-commensurability}, is equal to
%\begin{equation}\label{equation:covolume}
%\covol(\Gamma_{\mathcal{O}}^1)=\frac{ \abs{\Delta_k}^{\frac{3}{2}}\zeta_k(2)}{4\pi^2}\prod_{\frakp\mid\disc_f(B')}\left({\abs{\frakp}}-1\right).
%\end{equation}

It follows from Dirichlet's unit theorem that there exists a fundamental unit $u_0\in\mathcal O_{L_i}^*$ such that $u_0^m\not\in \mathcal{O}_k^*$ for any $m\geq 1$. Let $\sigma\in\Gal(L_i/k)$ denote the nontrivial Galois automorphism of $L_i/k$ and define $u=u_0/\sigma(u_0)$. It is then clear that $\Norm_{L_i/k}(u)=1$ and $u^n\not\in \mathcal O_k^*$ for any $n\geq 1$. Work of Brindza \cite{brindza} and Hajdu \cite{hajdu} shows that $u_0$ maybe chosen so that the absolute logarithmic Weil height $h(u)$ of $u$ satisfies \[ h(u)\leq 2h(u_0) \leq 6^{n_{L_i}}n_{L_i}^{5n_{L_i}}\Reg_{L_i}=2^{44}3^4\Reg_{L_i}\leq 2^{44}3^4\abs{\Delta_{L_i}}^2, \] where the last inequality follows from \cite[Lemma 4.4]{LMPT}. Because the algebra $B/k$ must be ramified at at least two finite primes of $k$, Theorem 3.3 of \cite{Chinburg-Friedman-selectivity} implies that every maximal order of $B$ (so in particular $\mathcal O$) admits an embedding of the quadratic $\mathcal O_k$-order $\mathcal O_k[u]$. Let $\gamma_i$ denote the image of $u$ in $\Gamma_{\mathcal O}^1$. The logarithm of the minimal polynomial of $u$ is equal to $4h(u)$, hence Lemma 12.3.3 of \cite{MR} and Proposition \ref{proposition:constructionoffields} imply that for any $\epsilon>0$,
\[
%\label{equation:lengthbound}
\ell(\gamma_i)\leq 2^{47}3^4\abs{\Delta_{L_i}}^2\leq c'(k,\epsilon) n^{16+2\epsilon},
\]
where the constant $c'(k,\epsilon)$ depends only on $k$ and $\epsilon$. By construction the extension $L_i/\Q$ is not Galois for any $i$. Lemma 2.3 of \cite{chinburg-geodesics} therefore implies that $\lambda(\gamma_i)^m$ is not real for any $m\geq 1$. This allows us to deduce from Proposition \ref{prop:notreal} that the geodesic associated to $\gamma_i$ lies on no totally geodesic surface of $\textbf{H}^3/\Gamma_{\mathcal{O}}^1$.

Because the discriminant of $B$ is of the form $p_1\cdots p_r\mathcal O_k$ for rational primes $p_1,\dots, p_r$ which split in $k/\Q$, Theorems 9.5.4 and 9.5.5 of \cite{MR} imply that $\textbf{H}^3/\Gamma_{\mathcal{O}}^1$ contains infinitely many primitive, totally geodesic, pairwise incommensurable surfaces.

Borel's volume formula \cite{borel-commensurability} shows that
\begin{align*}
\covol(\Gamma_{\mathcal{O}}^1) &= \frac{ \abs{\Delta_k}^{\frac{3}{2}}\zeta_k(2)}{4\pi^2}\prod_{\frakp\mid\disc_f(B)}\left({\abs{\frakp}}-1\right)\\
&\leq c_k \abs{\disc_f(B)},
\end{align*}
where $c_k$ is a positive constant depending only on $k$. Theorem \ref{theorem:theorem1} now follows from Theorem \ref{theorem:quaternionalgebraswithspecifieddiscriminantshape} and the accompanying discussion upon fixing a choice of $k$ and replacing $\epsilon$ by $\epsilon/2$.
\section{Proof of Theorem \ref{theorem:theorem2}}

The proof uses the following form of Linnik's theorem on primes in arithmetic progressions.

\begin{prop}\label{prop:linnik} There is an absolute constant $L$ for which the following holds: For every pair of coprime integers $a$ and $q$ with $q\ge 2$, the $n$th prime $p \equiv a\pmod{q}$ satisfies
\[ p \le q^{L} \cdot n \log{(2n)}. \]
\end{prop}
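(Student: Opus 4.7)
The plan is to deduce the proposition from Linnik's ``log-free'' density estimate for primes in arithmetic progressions:
\[ \pi(x;q,a) \gg \frac{x}{\phi(q)\log{x}} \qquad \text{for } x \ge q^{L_1}, \]
where $L_1>0$ is an absolute constant and the implied constant is also absolute. This is a standard consequence of Linnik's zero density theorem for Dirichlet $L$-functions (see, e.g., Chapter~18 of Iwaniec--Kowalski, \emph{Analytic Number Theory}), and it subsumes the classical ``least prime'' form of Linnik's theorem. The entire proposition will follow by invoking this estimate together with elementary bookkeeping.

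From that estimate I would argue as follows, writing $p_n = p_n(a,q)$ for the $n$th prime $\equiv a\pmod{q}$. If $n \le \pi(q^{L_1};q,a)$, then trivially $p_n \le q^{L_1}$, which is bounded by $q^{L_1+1}\,n\log(2n)$ since $n\log(2n)\ge\log 2$ for $n\ge1$ and $(\log 2)^{-1}\le 2 \le q$. Otherwise $p_n>q^{L_1}$, and applying the density estimate at $x=p_n$ yields
\[ n \;=\; \pi(p_n;q,a) \;\gg\; \frac{p_n}{\phi(q)\log p_n}, \qquad\text{so}\qquad p_n \;\ll\; \phi(q)\,n\log p_n. \]
Bootstrapping gives $\log p_n \ll \log q + \log(2n)$, whence
\[ p_n \;\ll\; \phi(q)\,n(\log q + \log(2n)) \;\ll\; q\log q\cdot n\log(2n) \;\le\; q^{2}\,n\log(2n), \]
for $q\ge2$ and $n\ge1$, using $\phi(q)\le q$ and the elementary bound $\log q+\log(2n)\ll \log q\cdot\log(2n)$. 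Combining the two cases produces $p_n \le q^{L}\,n\log(2n)$ with some absolute $L$, say $L=L_1+2$.

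The main technical obstacle is not the bookkeeping above but invoking the log-free estimate with effective, absolute constants uniform in both $a$ and $q$. This is precisely the deep content of Linnik's theorem (and its subsequent refinements), but is by now thoroughly standard in the analytic number theory literature. No new ideas are required; the proposition reduces to quoting this result and performing the calculation outlined.
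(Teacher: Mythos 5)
Your proposal is correct and takes essentially the same route as the paper: both deduce the statement from a Linnik-type lower bound for $\pi(x;q,a)$, with absolute constants, valid once $x \ge q^{L_1}$ (the paper cites Corollary 18.8 of Iwaniec--Kowalski), and the only difference is bookkeeping --- the paper substitutes $x = A q^{L} n\log(2n)$ and checks the count is at least $n$, while you evaluate the estimate at $x = p_n$ and bootstrap, finally absorbing absolute constants into $q^{L}$ just as the paper does. One small remark: the bound actually quoted from Iwaniec--Kowalski has $q^{1/2}\phi(q)$ rather than $\phi(q)$ in the denominator, but your calculation tolerates this extra $q^{1/2}$ loss at the cost of a slightly larger absolute exponent $L$, so nothing breaks.
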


\begin{proof} From \cite[Corollary 18.8, p. 442]{IK}, there are constants $c > 0$ and $L> 0$ with the property that
\begin{equation}\label{eq:IKlinnik} \#\{p \le x: p \equiv a\pmod{q}\} \ge \frac{c}{q^{1/2} \phi(q)} \frac{x}{\log{x}} \end{equation}
whenever $x \ge q^{L}$. Of course, this remains true if we increase the value of $L$, so we can assume that $L \ge 3$.

Let $A$ be a large constant, to be specified more precisely momentarily. To start with, assume $A \ge 2$. We apply \eqref{eq:IKlinnik} with $x= A q^{L} \cdot n \log(2n)$, noting that this choice certainly satisfies $x \ge q^L$. Now
\begin{align*} \log{x} &\le 2 \max\{\log(A \cdot q^L), \log(n\log(2n))\} \\
&\le 2 \max\{A^{1/2} q^{L/2}, 2\log{n}\}. \end{align*}
(We use here that $\log{t} < \sqrt{t}$ for all $t > 0$, and that $n\log(2n) < n^2$ for all natural numbers $n$.)
If this maximum is given by the first term, then the right-hand side of \eqref{eq:IKlinnik} is bounded below by
\[ \frac{c}{q^{1/2} \phi(q)} \cdot \frac{A q^L \cdot n\log(2n)}{2 A^{1/2} q^{L/2}} = \frac{c\cdot A^{1/2}}{2} \frac{q^{L/2}}{q^{1/2}\phi(q)} \cdot n \log(2n) \ge \frac{c \cdot A^{1/2} \log{2}}{2} \cdot n. \]
On the other hand, if the maximum is given by the second term, we obtain a lower bound of
\[  \frac{c}{q^{1/2} \phi(q)} \cdot \frac{A q^L \cdot n\log(2n)}{4\log{n}} \ge \frac{c A}{4} \cdot \frac{q^L}{q^{1/2} \phi(q)} n \ge \frac{cA}{4} n. \]
Choosing $A = \max\{2, 4/c, (2/(c\log{2}))^2\}$, we see that in either case the right-hand side of \eqref{eq:IKlinnik} is at least $n$. This proves the proposition except with an upper bound  on $p$ of $A q^{L} \cdot n \log(2n)$. Finally, we increase the value of $L$ in order to absorb the factor of $A$ into $q^L$.
\end{proof}

\begin{lem}\label{lem:bigbound} One can choose positive constants $A$ and $B$ so that the following holds. Let $n$ be a positive integer, and let $p_1, p_2, \dots, p_n$ be the first $n$ primes congruent to $1$ modulo $4$. One can choose $n+1$ distinct primes $q_1, q_2, \dots, q_{n+1}$ such that
\begin{enumerate}
\item for all $1 \le i \le n$, the prime $q_i$ is inert in $\Q(\sqrt{p_i})$ but split in $\Q(\sqrt{p_j})$ for $1 \le j\ne i \le n$,
\item $q_{n+1}$ is inert in all of the fields $\Q(\sqrt{p_1}), \dots, \Q(\sqrt{p_n})$,
\item each $q_i$ belongs to the interval $[2,A\cdot n^{B n}]$.
\end{enumerate}

\end{lem}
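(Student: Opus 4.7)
The plan is to use quadratic reciprocity to translate the splitting conditions in the fields $\Q(\sqrt{p_j})$ into prescribed Legendre symbol conditions modulo the $p_j$, and then invoke the Chinese Remainder Theorem together with Proposition \ref{prop:linnik} to produce a prime in each of the resulting residue classes modulo $P := p_1 p_2 \cdots p_n$.

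First I would set up the local conditions. For an odd prime $q$ distinct from every $p_j$, the prime $q$ splits (respectively, is inert) in $\Q(\sqrt{p_j})$ according to whether $\left(\frac{p_j}{q}\right)$ equals $+1$ or $-1$. Since $p_j \equiv 1 \pmod{4}$, quadratic reciprocity gives $\left(\frac{p_j}{q}\right) = \left(\frac{q}{p_j}\right)$. The required behavior of $q_i$ therefore becomes a purely local condition on $q_i$ modulo each $p_j$: for $i \le n$, the prime $q_i$ must be a quadratic nonresidue modulo $p_i$ and a quadratic residue modulo $p_j$ for each $j \ne i$, while $q_{n+1}$ must be a quadratic nonresidue modulo every $p_j$.

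Next I would apply CRT. For each $i \in \{1,\dots,n+1\}$, the prescribed sign pattern cuts out $\phi(P)/2^n$ coprime residue classes modulo $P$; fix any one such class $a_i \bmod P$. The $n+1$ prescribed sign patterns are pairwise distinct, so the $a_i$ are distinct modulo $P$, whence primes drawn from these classes are automatically pairwise distinct and coprime to every $p_j$ (so in particular distinct from every $p_j$). By Proposition \ref{prop:linnik} applied with $n=1$, the least prime $q_i$ in the residue class $a_i \bmod P$ satisfies
\[ q_i \le P^L \cdot 1 \cdot \log 2 < P^L. \]

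Finally I would bound $P$. The prime number theorem for the arithmetic progression $1 \bmod 4$ gives $p_n \le c_1 n\log(2n)$ for an absolute constant $c_1$, exactly as invoked at the start of the proof of Proposition \ref{proposition:constructionoffields}. Thus
\[ \log P = \sum_{j=1}^n \log p_j \le n \log p_n \ll n \log n, \]
so $P \le n^{Cn}$ for some absolute $C$ and all sufficiently large $n$. Combining this with $q_i \le P^L$ yields $q_i \le n^{CLn}$, which has the desired shape $A \cdot n^{Bn}$ after absorbing finitely many small values of $n$ into $A$. The main, and essentially only, technical point is keeping track of the absolute constants in Linnik's theorem against the mildly super-polynomial growth of $P$; no deeper obstacle arises.
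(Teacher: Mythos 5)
Your proposal is correct and follows essentially the same route as the paper: translate the splitting conditions into residue classes modulo $p_1\cdots p_n$ (via quadratic reciprocity, which the paper leaves implicit), invoke Proposition \ref{prop:linnik}, and bound $p_n$ by the prime number theorem for progressions, giving the stated $A\cdot n^{Bn}$ shape. The only (harmless) difference is how distinctness of the $q_i$ is secured: you observe that the $n+1$ sign patterns give pairwise distinct residue classes so the least prime in each class works, whereas the paper chooses the $q_i$ inductively among the first $n+1$ primes of the relevant class.
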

\begin{proof} %By the prime number theorem for progressions (or Proposition \ref{prop:linnik}), $p_n < Cn \log(2n)$ for some absolute constant $C$. We choose the $q_i$ inductively. Let $1 \le i \le n$, and assume $q_j$ has already been chosen for $j < i$. (This assumption is vacuous when $i=1$.) Choose an integer $b_i$ with $\leg{p_i}{b_i}=-1$, i.e., with $b_i$ a quadratic nonresidue modulo $p_i$. Among the first $i$ primes from the residue class $b_i \bmod{p_i}$, there must be some prime $q_i$ that has not appeared as $q_j$ for any $j < i$. By Proposition \ref{prop:linnik},
%\[ q_i \le p_i^{L} \cdot i \log(2i) \le p_n^{L} \cdot n \log(2n) \le (Cn \log(2n))^{L} \cdot n \log(2n) \le  A_0 \cdot n^{B_0} \]
%for certain positive constants $A_0$ and $B_0$.
%
%We can ensure that a prime is inert in every $\Q(\sqrt{p_i})$ by placing it in a suitable residue class modulo $p_1 \cdots p_n$. Moreover, if we look among the first $n+1$ primes in this residue class, at least one  will be distinct from $q_1, \dots, q_n$. Choosing $q$ as one of these primes, Proposition \ref{prop:linnik} yields
%\begin{align*} q &\le (p_1 \cdots p_n)^{L} \cdot (n+1) \log(2n+2) \\&\le p_n^{nL} \cdot (n+1)\log(2n+2) \le (Cn \log(2n))^{nL} \cdot (n+1)\log(2n+2), \end{align*}
%and it is straightforward to check that the final expression here is bounded by $A_1 \cdot n^{B_1}$ for certain absolute positive constants $A_1$ and $B_1$. The lemma follows with $A = \max\{A_0,A_1\}$ and $B=\max\{B_0, B_1\}$.
We choose the $q_i$ inductively. Assume $q_j$ has already been chosen for $j < i$. The splitting condition on $q_i$ can be enforced by placing $q_i$ in a suitable residue class modulo $p_1 \cdots p_n$. Among the first $n+1$ primes in this residue class, there must be some choice of $q_i$ not equal to $q_j$ for any $j < i$. From Proposition \ref{prop:linnik},
\[ q_i \le (p_1 \cdots p_n)^{L} \cdot (n+1) \log(2n+2). \]
By the prime number theorem for progressions (or Proposition \ref{prop:linnik}), $p_n < Cn \log(2n)$ for some absolute constant $C$. Thus,
\[ (p_1 \cdots p_n)^{L} \cdot (n+1) \log(2n+2) \le p_n^{nL} \cdot (n+1)\log(2n+2) \le (Cn \log(2n))^{nL} \cdot (n+1)\log(2n+2). \]
It is straightforward to check that the final expression here is bounded by $A\cdot n^{Bn}$ for certain absolute positive constants $A$ and $B$.
\end{proof}

\begin{lem}\label{lem:woodlemma}
Let $q_1,\dots,q_{n+1}$ be distinct primes. The number of imaginary quadratic fields $k$ in which $q_1, \dots,q_n$ are inert, $q_{n+1}$ splits, and which satisfy $\abs{\Delta_k}<x$, is asymptotic to $cx$ as $x\to\infty$, where $c>0$. In fact, $c\ge \frac{3}{\pi^2} \cdot \frac{1}{3^{n+1}}$.
\end{lem}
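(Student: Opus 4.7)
The plan is to parameterize imaginary quadratic fields $k$ by positive squarefree integers $m$ via $k = \Q(\sqrt{-m})$, and to translate each prescribed splitting condition into a congruence on $m$. For an odd prime $q$ coprime to $m$, the value of the Kronecker symbol $(-m/q)$ determines whether $q$ splits or is inert in $k$, while for $q = 2$ the splitting behavior is determined by $m$ modulo $8$. By the Chinese Remainder Theorem, the joint conditions at $q_1, \dots, q_{n+1}$ are therefore equivalent to requiring $m$ to lie in a specified union of residue classes modulo an integer $N$ depending only on the $q_i$.

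Next, I would apply the standard count (via M\"obius inversion) of squarefree integers in a residue class: for $a$ coprime to $N$, the number of squarefree $m \le Y$ with $m \equiv a \pmod N$ is asymptotic to $\frac{Y}{N}\prod_{p\nmid N}(1-1/p^2)$. The count is then organized into three cases according to $m\pmod 4$: since $\Delta_k = -m$ when $m \equiv 3 \pmod 4$ and $\Delta_k = -4m$ when $m \equiv 1, 2 \pmod 4$, the bound $\abs{\Delta_k} < x$ translates respectively to $m<x$ or $m<x/4$. Summing these contributions yields an asymptotic of the form $cx$ with a positive constant $c$ that factors as a product of local densities at the $q_i$.

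For the lower bound $c \ge \frac{3}{\pi^2}\cdot\frac{1}{3^{n+1}}$, I would verify directly that each local factor is at least $\frac{1}{3}$. A short calculation shows that for an odd prime $q$, the proportion of imaginary quadratic fields having a specified split or inert behavior at $q$ equals $\frac{q}{2(q+1)}\ge \frac{3}{8}$, while for $q=2$ each of the three possible splitting behaviors occurs with proportion exactly $\frac{1}{3}$. Independence of the conditions at distinct primes (via CRT) gives a joint density of at least $(1/3)^{n+1}$, and multiplying by the total density $\frac{3}{\pi^2}$ of imaginary quadratic discriminants yields the claim. The principal bookkeeping obstacle lies in tracking the three $m \pmod 4$ cases together with the asymmetric role of $q=2$ (whose splitting condition is given modulo $8$ rather than modulo $q$); alternatively, one may simply invoke the general prime-splitting density theorem of Wood \cite{wood10} already cited in the introduction.
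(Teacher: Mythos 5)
Your proposal is correct, and it reaches the stated constant, but it takes a more elementary, self-contained route than the paper. You translate the splitting conditions into Kronecker-symbol congruences on the squarefree parameter $m$, combine them by CRT, and count squarefree integers in arithmetic progressions; independence of the local conditions is then automatic, and your local densities $q/(2(q+1))$ for odd $q$ (each $\ge 3/8$) and exactly $1/3$ for each behavior at $q=2$ multiply against the total density $\frac{3}{\pi^2}$ of imaginary quadratic discriminants to give $c \ge \frac{3}{\pi^2}\cdot 3^{-(n+1)}$. The paper instead works probabilistically over all quadratic fields: it invokes Wood's results (as packaged in \cite[Proposition 3.3]{LMPT}) to assert that the sign condition and the conditions at the $q_i$ are independent, computes $\Prob(\ell \text{ ramifies})=\frac{1}{\ell+1}$ by the same classical squarefree-in-progressions argument you use, and bounds every factor $\frac{\ell}{2\ell+2}\ge \frac13$ uniformly, arriving at the identical constant $\frac{6}{\pi^2}\cdot\frac12\cdot 3^{-(n+1)}$. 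What your approach buys is independence from Wood's theorem (which, as the paper itself remarks, is stronger than needed here); what the paper's approach buys is brevity and the avoidance of the bookkeeping you flag. Two small points to tidy in your write-up: the squarefree-count asymptotic you quote requires the residue class to be coprime to the modulus, so the family $m\equiv 2\pmod 4$ (where $\Delta_k=-4m$) needs a separate one-line treatment (write $m=2m'$ with $m'$ odd squarefree); and you should note explicitly that the differing ranges $m<x$ versus $m<x/4$ in the three $m\bmod 4$ cases do not disturb the local proportions at the $q_i$, since those conditions are congruences to moduli coprime to $8$, so the same product of local factors appears in each subfamily before summing.
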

\begin{proof} Let $P$ be a property of quadratic fields. We define the \emph{probability of $P$}, taken over the class of quadratic fields, as the limit
\[ \lim_{x\to\infty} \frac{\#\{\text{quadratic fields $k$ possessing $P$, having $|\Delta_{k}|\le x$}\}}{\#\{\text{quadratic fields $k$ with $|\Delta_{k}|\le x$}\}}, \]
provided that the limit exists. It is well-known that the denominator here is asymptotic to $\frac{6}{\pi^2}x$, as $x\to\infty$. Results of Wood \cite{wood10}, as collected in \cite[Proposition 3.3]{LMPT}, imply that
\begin{multline*} \Prob(\text{$k$ imaginary quadratic, $q_{n+1}$ splits, and $q_1, \dots, q_n$ inert}) \\ = \Prob(\text{$k$ imaginary quadratic}) \cdot \Prob(\text{$q_{n+1}$ splits}) \cdot \prod_{i=1}^{n} \Prob(\text{$q_i$ inert}),  \end{multline*}
that
\[ \Prob(\text{$k$ imaginary quadratic}) = \frac{1}{2}, \]
and that
\[ \Prob(\text{$q_{n+1}$ splits}) = \frac{1}{2}(1-\Prob(\text{$q_{n+1}$ ramifies})), \]
\[ \Prob(\text{$q_i$ inert}) = \frac{1}{2}(1-\Prob(\text{$q_i$ ramifies})) \quad(1\le i \le n). \]
Now for each prime $\ell$, the probability that $\ell$ ramifies is $\frac{1}{\ell+1}$. This last fact may be proved directly by obtaining an asymptotic formula for the count of fundamental discriminants divisible by $\ell$; this amounts to a problem on squarefree numbers in arithmetic progressions, whose solution is classical. In fact, this exact probability calculation is implicit in the proof of \cite[Theorem 1.5]{pollack}. Thus,
\[ \frac{1}{2}(1-\Prob(\text{$\ell$ ramifies})) = \frac{\ell}{2\ell+2} \ge \frac{1}{3}. \]
Piecing everything together, we find that
\[ \Prob(\text{$k$ imaginary quadratic, $q_{n+1}$ splits, $q_1, \dots, q_n$ inert}) \ge \frac{1}{2} \cdot(1/3)^{n+1}.  \]
The lemma follows.
\end{proof}
	
We now prove Theorem \ref{theorem:theorem2}.

Let $L_1,\dots, L_n$ be real quadratic fields of prime discriminants $p_1,\dots,p_n$, where $p_i\equiv 1 \pmod{4}$. For every $i\leq n$, let $q_i$ be a prime which is inert in $L_i/\Q$ and which splits in $L_j/\Q$ for any $i\neq j$. Let $q$ be a prime which is inert in all of the extensions $L_i/\Q$. We assume that the $n+1$ primes $q, q_1,\dots, q_n$ are distinct. Finally, let $B_1,\dots,B_n$ be quaternion algebras over $\Q$ such that $\Ram(B_i)=\{q, q_i\}$. The Albert-Brauer-Hasse-Noether theorem implies that if $k$ is a number field, $L/k$ a quadratic extension of fields and $B$ a quaternion algebra defined over $k$, then $B$ admits an embedding of $L$ if and only if no prime of $k$ which ramified in $B$ splits in $L/k$. In particular this shows that each quaternion algebra $B_i$ admits an embedding of the field $L_i$ and does not admit an embedding of $L_j$ for any $1\leq j\neq i\leq n$.

For each $i\leq n$, let $\mathcal O_{B_i}$ be a maximal order in $B_i$ and $\Gamma_{\mathcal O_{B_i}}^1$ the associated arithmetic Fuchsian group. Theorem 12.2.6 of \cite[Chapter 12]{MR}, which is stated for arithmetic Kleinian groups but holds {\em mutatis mutandis} for arithmetic Fuchsian groups, shows that we may select a hyperbolic element $\gamma_i\in \Gamma_{\mathcal O_{B_i}}^1$ whose eigenvalue $\lambda(\gamma_i)$ of largest absolute value satisfies $L_i=\Q(\lambda(\gamma_i))$. Note that the Fuchsian groups $\{ \Gamma_{\mathcal O_{B_i}}^1 \}$ are pairwise noncommensurable as they are derived from quaternion algebras which are not isomorphic over $\Q$. Borel's volume formula \cite{borel-commensurability} (see also \cite[Chapter 11.1]{MR}) and Lemma \ref{lem:bigbound} show that

\[ \mathrm{coarea}(\Gamma_{\mathcal O_{B_i}}^1) \leq \frac{\pi}{3}(q-1)(q_i-1) \leq c_1 n^{c_2n}\]

for absolute constants $c_1,c_2>0$. We may bound $\ell(\gamma_i)$ using the same argument employed in the proof of Theorem \ref{theorem:theorem1}. This argument, along with the bound $p_i<Cn\log(2n)$ for some absolute constant $C>0$ which follows from the prime number theorem for arithmetic progressions, shows that we may take $\ell(\gamma_i) < c_3 \left(n\log(2n)\right)^2$ for some positive constant $c_3$.

Suppose now that $k$ is an imaginary quadratic field in which $q$ splits and in which all of the $q_i$ are inert. This implies that each of the $k$-quaternion algebras $B_1\otimes_{\Q} k,\dots, B_n\otimes_{\Q} k$ are division algebras. In fact, we may conclude from Theorem \ref{theorem:tgs} that these algebras are all isomorphic to the same quaternion division algebra $B$ over $k$. Note that $B$ is the algebra over $k$ for which $\disc(B)=q\mathcal O_k$. Conjugating the orders $\mathcal{O}_{B_1},\dots,\mathcal{O}_{B_n}$ if necessary, we may assume that the maximal orders $\mathcal{O}_{B_1}\otimes_{\textbf{Z}} \mathcal{O}_k,\dots,\mathcal{O}_{B_n}\otimes_{\textbf{Z}} \mathcal{O}_k$ are all equal. Here we have used the fact that all maximal orders in an indefinite quaternion algebra over $\Q$ are conjugate. Denote by $\mathcal O$ the resulting maximal order of $B$. This discussion shows that the Kleinian group $\Gamma_{\mathcal O}^1$ contains all of the Fuchsian groups $\Gamma_{\mathcal O_{B_i}}^1$ and hence $\textbf{H}^3/\Gamma_{\mathcal O}^1$ contains $n$ geodesics lying on pairwise noncommensurable totally geodesic surfaces, all of whose lengths and areas have been bounded above. Using the trivial estimate $\zeta_k(2)<\zeta(2)^2$ and Lemma \ref{lem:bigbound}, we conclude from Borel's volume formula \cite{borel-commensurability} that there are constants $c_4,c_5>0$ such that

\[ \mathrm{covol}(\Gamma_{\mathcal O}^1)=\frac{\abs{\Delta_k}^{\frac{3}{2}}\zeta_k(2)}{4\pi^2}\prod_{\mathfrak q\mid q\mathcal O_k}(\abs{\mathfrak q}-1)\leq  c_4n^{c_5n}\abs{\Delta_k}^{\frac{3}{2}}.\]

Theorem \ref{theorem:theorem2} now follows from Lemma \ref{lem:woodlemma}.

\subsection*{Acknowledgments} The first author was partially supported by NSF RTG grant DMS-1045119 and an NSF Mathematical Sciences Postdoctoral Fellowship. The third author was partially supported by NSF grant DMS-1402268.

We thank \texttt{mathoverflow} user `GH from MO' for a post calling attention to the form of Linnik's theorem appearing as Proposition \ref{prop:linnik}.

%---------------------------------------------------------------------------------

%---------------------------------------------------------------------------------
%---------------------------------------------------------------------------------

\end{document}